\providecommand{\U}[1]{\protect\rule{.1in}{.1in}}
\newtheorem{theorem}{Theorem}
\newtheorem{definition}[theorem]{Definition}
\newtheorem{lemma}[theorem]{Lemma}
\newtheorem{proposition}[theorem]{Proposition}
\newtheorem{remark}[theorem]{Remark}
\newenvironment{proof}[1][Proof]{\noindent\textbf{#1.} }{\ \rule{0.5em}{0.5em}}
\newdimen\dummy
\def\ds{\displaystyle}
\newcommand{\one}{1\!\!\!\;\mathrm{l}}
\def\R{\mathbb R}
\def\N{\mathbb N}
\def\E{\mathbb E}
\def\Q{\mathbb Q}
\begin{document}

\title{Fokker-Planck equations for  SPDE with  non-trace class noise}

\author{G. Da Prato\\
{\small Scuola Normale Superiore, Pisa, Italy}
  \\ F. Flandoli
  \\ {\small    Universit\`a di Pisa, Italy }\\
 M. R\"{o}ckner
\\
{\small  University of Bielefeld, Germany } }\bigskip

 \maketitle\bigskip

 \noindent{\bf Abstract}. In this paper we develop a new technique to prove existence of solutions 
of Fokker-Planck equations on Hilbert spaces for Kolmogorov operators with non trace-class second order coefficients or equivalently with an associated stochastic partial differential equation (SPDE) with non trace-class noise. Applications include stochastic $2D$ and $3D$--Navier--Stokes equations with non trace-class additive noise. 

 \bigskip

\section{Introduction}

Our aim is to solve the infinite dimensional Fokker-Planck Equation 
\[
L^{\ast}\mu=0,\qquad\mu|_{t=0}=\mu_{0}\eqno{(FPE)}
\]
in a space of measure valued solutions of the form $\mu_{t}dt$ where $\mu_{t}$
are probability measures on $H$.  The problem has been studied intensively in recent years (see e.g, \cite{BDR}, \cite {BDR2}, \cite{BDR3}, \cite{BDRRS13}, \cite{RZZ13} and the references therein). Concerning existence there are two different approaches to infinite dimensional Fokker-Planck equations depending on whether the second order coefficient of the corresponding  Kolmogorov operator $L$ (see below) is of trace class or not. The first case is studied in detail in \cite{BDRRS13} (including also the case of   continuity equations whose second order coefficient is identically zero) and the approach is based on the method of Lyapunov functions. This method, however, so far could not be implemented when the second order coefficient is not of trace class. This case has been studied in \cite{BDR2} and \cite{RZZ13},  using an approximation technique, based on solving the stochastic differential equations associated to the approximating  Kolmogorov operator $L_n$. Then suitable accumulation points are proved to be solutions of the limiting given Fokker-Planck equation. The results in  
\cite{BDR2} and \cite{RZZ13} are, however, very limited in  applications, e.g.  essentially only Fokker-Planck equations associated to stochastic reaction--diffusion and Burgers equations  with non trace class noise (including white noise)   or combinations of such are covered.

In this paper we develop a new general  approach for this ``non-trace class case'' which applies to a much wide class of examples, including  in particular, the Fokker-Planck equation of the stochastic $2D$ and $3D$ Navier--Stokes equations with non-trace class noise (se Section 5.2 below). For the sake of simplicity we restrict ourselves to the case where the second coefficient of $L$ is 
constant.
  In the language of stochastic
equations we   restrict ourselves to additive noise.

The Kolmogorov operator \thinspace$L$ in equation (FPE) above is defined as
\[
\left(  Lu\right)  \left(  x,t\right)  :=\frac{\partial u}{\partial t}\left(
x,t\right)  +\sum_{i=1}^{\infty}a^{i}\left(  \partial_{x_{i}}^{2}u\right)
\left(  x,t\right)  +\sum_{i=1}^{\infty}b^{i}\left(  x,t\right)  \left(
\partial_{x_{i}}u\right)  \left(  x,t\right)
\]
on all functions $u:H\times\left[  0,T\right]  \rightarrow\mathbb{R}$ which
are smooth and finite dimensional (also called cylindrical). Here $H$ is a separable Hilbert space (norm
$\left\Vert .\right\Vert _{H}$, inner product $\left\langle .,.\right\rangle
_{H}$), $\left(  e_{n}\right)  $ is a c.o.s. in $H$, $H_{n}$ is the span of
$e_{1},...,e_{n}$, $\pi_{n}$ the corresponding finite dimensional projection,
and the attribute \textquotedblleft finite dimensional\textquotedblright\ to
$u$ means that $u\left(  t,x\right)  =u(\pi_nx,t)  $ for all
$x$, for some $n\in\mathbb{N}$ and $u_n:\mathbb R^n\times [0,T]\to\mathbb R$. Here and below we shall always identify $H_n$ with $\R^n$ fixing $(e_n)$.

The ideas of the present work are general, in particular the approach by an
auxiliary Fokker-Planck Equation on \textit{product space}. We develop them
under quite general assumptions which include basic cases
 like stochastic semilinear parabolic equations with linear growth (but see also Remark \ref{r14}) and, mainly, stochastic 2D and 3D Navier-Stokes
equations. A \textit{direct solution} of the Fokker--Planck equation corresponding to these
equations when the noise has the covariance of the class considered here (not
trace class or as general as possible) is new; for other approaches to the
existence of solutions for the stochastic 2D and 3D Navier-Stokes equations,
especially in the direction of general covariance, see for instance
\cite{FlaNodea}, \cite{AlbFer}, \cite{AlbFer2}, \cite{DaPDeb2D} in 2D and
\cite{DaPDeb}, \cite{FlaGat}, \cite{FlaJFA}, \cite{FlaRomTrans},
\cite{FlaRom}, \cite{FlaCIME} in 3D.

\section{Assumptions and main result\label{section assumptions}}

The numbers $a^{i}\geq0$ and the measurable functions $b^{i}:H\times\left[
0,T\right]  \rightarrow\mathbb{R}$ are subject to a series of assumptions. We
do \textit{not} assume the finite trace condition $\sum_{i=1}^{\infty}
a^{i}<\infty$ as in \cite{BDR}, \cite{BDRRS13}, but we do not allow dependence on $\left(
x,t\right)  $ (hence our applications restrict to additive noise).

Let $E\subset H$ be a separable Banach space with dense continuous injection.
Denote the norm in $E$ by $\left\Vert .\right\Vert _{E}$. We assume $e_{i}\in
E$ and other conditions below.

We assume that $b^{i}$ have the structure
\[
b^{i}\left(  x,t\right)  =-\alpha_{i}^{2}x_{i}+f^{i}\left(  x,t\right)
\]
with real numbers $\alpha_{i}^{2}>0$ and continuous functions $f^{i}
:H\times\left[  0,T\right]  \rightarrow\mathbb{R}$. The basic assumption on
the sequences $\left(  a^{i}\right)  $, $\left(  \alpha_{i}^{2}\right)  $ is
\begin{equation}
\lim_{i\rightarrow\infty}\alpha_{i}^{2}=\infty,\qquad\sum_{i=1}^{\infty}%
\frac{a^{i}}{\alpha_{i}^{2}}<\infty. \label{assumption 1}%
\end{equation}
Concerning the sequence of functions $f^{i}\left(  x,t\right)  $, we assume%
\begin{align}
\left\vert f^{i}\left(  x,t\right)  \right\vert  &  \leq C_{i}\left(
1+\left\Vert x\right\Vert _{H}^{p_{0}}\right)  ,\quad f^{i}\left(
\cdot,t\right)  \text{ locally Lipschitz in }x\text{,}\label{assumption 2}\\
\text{uniformly in }t  &  \in\left[  0,T\right] \nonumber
\end{align}
for some $C_{i}>0$, $p_{0}\geq1$.

Denote by $V$ the Hilbert space
\[
V=\left\{  x\in H:\left\Vert x\right\Vert _{V}^{2}:=\sum_{i=1}^{\infty}%
\alpha_{i}^{2}x_{i}^{2}<\infty\right\}
\]
where $x_{i}=\left\langle x,e_{i}\right\rangle _{H}$. It is compactly embedded
in $H$, by assumption (\ref{assumption 1}). Let $V^{\prime}$ be the dual space
of $V$ and let us use the identification $H=H^{\prime}$, so that  $V\subset H\subset
V^{\prime}$, with dense injections. We write $\left\langle .,.\right\rangle $
for the dual pairing between $V^{\prime}$ and $V$, so $\left\langle
x,y\right\rangle =\left\langle x,y\right\rangle _{H}$ when $x\in H$, $y\in V$. Note that then
$\pi_n$ has a natural extension from $H$ to $V'$.
We assume also that the Banach space $E\cap V$ is dense in $H$.

We assume that there is a Borel function $f:E\times\left[  0,T\right]
\rightarrow V^{\prime}$ such that $f^{i}\left(  x,t\right)  =\left\langle
f\left(  x,t\right)  ,e_{i}\right\rangle $ (in other words, we assume that the
series $f\left(  x,t\right)  :=\sum_{i=1}^{\infty}f^{i}\left(  x,t\right)
e_{i}$ converges in $V^{\prime}$ for all $x\in E$ and $t\in\left[  0,T\right]
$), and on $f$ we assume for some $C$, $k_0\in (0,\infty)$ and $\eta\in(0,1)$
\begin{equation}
\left\langle f\left(  v+z,t\right)  ,v\right\rangle \leq\eta\left\Vert
v\right\Vert _{V}^{2}+C\left\Vert v\right\Vert _{H}^{2}\left(  \left\Vert
z\right\Vert _{E}^{2}+1\right)  +C\left\Vert z\right\Vert _{E}^{k_{0}%
}+C\label{condition = assump 3}
\end{equation}
for all $z\in E$, $v\in E\cap V$. When $f$ grows more than linearly, this
assumption embodies a form of cancellation.

Let $(\beta_i(t))_{t\ge 0}$ be a sequence of independent Wiener processes on a probability space $(\Omega, \mathcal F, P)$ with normal filtration $(\mathcal F_t)_{t\ge 0}.$ Then the series of stochastic integrals, parametrized by
$\lambda\geq0$,
\begin{equation}
\label{e4'}
Z_{t}^{\lambda}:=\sum_{i=1}^{\infty}\int_{0}^{t}e^{-\left(  t-s\right)
\left(  \alpha_{i}^{2}+\lambda\right)  }\sqrt{a^{i}}d\beta_{i}\left(
s\right)  e_{i}
\end{equation}
defines a continuous Gaussian process in $H$, by assumption
(\ref{assumption 1}). Our last assumption on $E$ is that
\begin{equation}
Z_{\cdot}^{\lambda}\text{ is an }L^{2\vee k_{0}}\left(  0,T;E\right)
\text{-valued Gaussian variable}
\label{assumption 4.0}
\end{equation}
and for every $r>0$ one has
\begin{equation}
\lim_{\lambda\rightarrow\infty}P\left(  \int_{0}^{T}\left\Vert Z_{t}^{\lambda
}\right\Vert _{E}^{2}dt>r^{2}\right)  =0.
\label{assumption 4}
\end{equation}
Let   $\left(  A,D\left(  A\right)  \right)  $ and $\left(  Q,D\left(
Q\right)  \right)  $ denote  the self-adjoint linear operators
\[
D\left(  A\right)  =\left\{  x\in H:\sum_{i=1}^{\infty}\left(  \alpha_{i}%
^{2}\left\langle x,e_{i}\right\rangle _{H}\right)  ^{2}<\infty\right\}  ,\quad
Ax=-\sum_{i=1}^{\infty}\alpha_{i}^{2}\left\langle x,e_{i}\right\rangle
_{H}e_{i}%
\]%
\[
D\left(  Q\right)  =\left\{  x\in H:\sum_{i=1}^{\infty}\left(  a^{i}%
\left\langle x,e_{i}\right\rangle _{H}\right)  ^{2}<\infty\right\}  ,\quad
Qx=\sum_{i=1}^{\infty}a^{i}\left\langle x,e_{i}\right\rangle _{H}e_{i}.
\]
 Then $Z^\lambda_t,\;t\ge 0$, defined in \eqref{e4'}, can be rewritten as 
\[
Z_{t}^{\lambda}=\int_{0}^{t}e^{\left(  t-s\right)  \left(  A-\lambda\right)
}\sqrt{Q}dW_{s},\quad t\ge 0, 
\]
which is a continuous Gaussian process in $H$, with trace class covariance (by
assumption (\ref{assumption 1}))%
\[
Q_{t}^{\lambda}=\int_{0}^{t}e^{s\left(  A-\lambda\right)  }Qe^{s\left(
A-\lambda\right)  }ds,\quad t\ge 0.
\]
By assumption     (\ref{assumption 4.0}), $Z_{\cdot}^{\lambda}$ is an $L^{2}\left(
0,T;E\right)  $-valued Gaussian variable and solves
 the linear stochastic equation in $H$
\[
dZ_{t}^{\lambda}=AZ_{t}^{\lambda}dt+\sqrt{Q}dW_{t}-\lambda Z_{t}dt,\quad
Z_{0}^{\lambda}=0.
\]

\begin{remark}
\label{r0}
\em
Under a natural condition on $e^{tA},\;t\ge 0,$ \eqref{assumption 4.0} with $Z^0_t$ replacing $Z^\lambda_t$ actually implies \eqref{assumption 4}. Indeed, under the assumption that the restriction of $e^{tA},\;t\ge 0,$ is a strongly continuous semigroup on $E$, \eqref{assumption 4}  holds.
 This can be proved as follows:
 fix $i\in\mathbb N$. Then by It\^o's product rule
 $$
 e^{\lambda t}\int_0^t e^{\alpha_i^2 s} d\beta_i(s)=\int_0^t e^{\lambda s}e^{\alpha_i^2 s} d\beta_i(s)+\int_0^t\int_0^s  e^{\alpha_i^2 r} d\beta_i(r)\lambda e^{\lambda s}ds.
 $$
Hence    
$$
\begin{array}{lll}
\displaystyle \int_0^t e^{-(t-s)(\alpha_i^2 +\lambda)} d\beta_i(s)&=&\displaystyle\int_0^t  e^{-(t-s)\alpha_i^2 } d\beta_i(s)\\
\\
&&\displaystyle-\int_0^t\int_0^s  e^{-(t-r)\alpha_i^2 } d\beta_i(r)\lambda e^{-\lambda(t- s)}ds.
 \end{array}
 $$
Multipying by $\sqrt{a^i}\,e_i$ and taking summation over $i\in \mathbb N$ (here the convergence holds in $H$), we obtain
$$
\begin{array}{l}
\displaystyle Z^\lambda_t=\int_0^t e^{(t-s)A}\sqrt Q\,dW_t-
\int_0^t e^{(t-s)A}\int_0^se^{(s-r)A}\sqrt Q\,dW_r\,\lambda e^{-\lambda(t- s)}ds\\
\\
\displaystyle=Z^0_t-(1-e^{-\lambda t})\int_0^t e^{(t-s)A}Z^0_s\,\rho_\lambda(t-s)ds,
\end{array} 
$$
where $\rho_\lambda(t-s)=\one_{[0,t]}(s)(1-e^{-\lambda t})^{-1}\lambda e^{-\lambda(t- s)}$ weakly converges to the Dirac measure in $t$ as $\lambda\to\infty$.

Hence, since $Z^0_t\in E$ for $dt$--a.e. $t\in [0,T]$, it follows that
\begin{equation}
\label{e7'}
\lim_{\lambda\to\infty}\int_0^T\|Z^\lambda_t\|^2_E\,dt=0\quad P\mbox{\rm--a.s.},
\end{equation}
in particular \eqref{assumption 4}  holds.

More precisely,
$$
\begin{array}{l}
\ds  \left(\int_0^T\|Z^0_t -\int_0^t e^{(t-s)A}Z^0_s\,\rho_\lambda(t-s)ds\|^2_E\,dt\right) ^{1/2}\\
\\
\ds = \left(\int_0^T \|\int_0^t(Z_t^0-e^{(t-s)A}Z^0_s)\rho_\lambda(t-s)ds\|^2_E\,    dt \right) ^{1/2}\\
\\
\ds \le\left(\int_0^T T\int_0^t\left\| Z_t^0-e^{sA}Z^0_{t-s})\right\|^2_E\, \rho_\lambda(s)ds\,dt \right) ^{1/2}\\
\\
\ds \le  T^{1/2} \left[\int_0^T\int_0^T\left\| Z_t^0- Z^0_{t-s}\right\|^2_E \,dt \,\rho_\lambda(s)ds \right]^{1/2}\\
\\
\ds + T^{1/2} \left[\int_0^T\int_0^{T-s} \|(1-e^{sA})Z^0_t\|_E^2\,dt \,\rho_\lambda(s)ds \right]^{1/2}.
\end{array} 
$$
Since  for $P$-a.e. $\omega\in \Omega$ the first inner integral is continuous and bounded in $s\in[0,T]$, this implies \eqref{e7'}.
\end{remark} 
 
\begin{remark}\em 
The limit property in assumption (\ref{assumption 4}) is needed because of the
power 2 of $\left\Vert z\right\Vert _{E}^{2}$ in the term $C\left\Vert
v\right\Vert _{H}^{2}\left(  \left\Vert z\right\Vert _{E}^{2}+1\right)  $ of
assumption (\ref{condition = assump 3}), which is a sort of critical value
case;\ if instead we took the term \break $C\left\Vert v\right\Vert _{H}^{2}\left(  \left\Vert
z\right\Vert _{E}^{\beta}+1\right)  $ with $\beta<2$ in
(\ref{condition = assump 3}), we only need $Z_{\cdot}^{\lambda}$ being
$L^{2}\left(  0,T;E\right)  $-valued, but this is too restrictive for
applications to Navier-Stokes equations.
\end{remark}

We define $D(L)$ to be the linear space of finite dimensional regular
functions $u:H\times\left[  0,T\right]  \rightarrow\mathbb{R}$,  i.e. $u(t,x)=u_N( \langle e_1,x  \rangle,..., \langle e_N,x  \rangle,t)$ such that  $u_N\in C^{2,1}_b(\mathbb R^N\times [0,T])$ and
 $u\left(
x,T\right)  =0$. $D(L)$ is then   a (point and measure) separating class.

\begin{definition}
\label{def sol FPE}A family of Borel probability measures $\left(  \mu
_{t}\left(  dx\right)  \right)  _{t\in\left[  0,T\right]  }$ on $H$,
measurable in $t$, is a solution of the Fokker-Planck equation $(FPE)$ above if
\[
\int_{0}^{T}\int_{H}\left\Vert x\right\Vert _{H}^{p_{0}}\mu_{t}\left(
dx\right)  dt<\infty,
\]
where $p_{0}$ is given in assumption (\ref{assumption 2}), and
\[
\int_{0}^{T}\int_{H}\left(  Lu\right)  \left(  x,t\right)  \mu_{t}\left(
dx\right)  dt+\int_{H}u\left(  x,0\right)  \mu_{0}\left(  dx\right)  =0
\]
for all $u\in D\left(  L\right)  $.
\end{definition}

The double integral in the above formulation is then well defined  (see Remark 5 below) because of (2) and  the assumed moment condition. We can now state
our main theorem.

\begin{theorem}
\label{main theorem}Under the assumptions (\ref{assumption 1}%
)-(\ref{assumption 4}), for any Borel probability measure $\mu_{0}$ on $H$ such
that%
\[
\int_{H}\left\Vert x\right\Vert _{H}^{p_{1}}\mu_{0}\left(  dx\right)  <\infty
\]
for some $p_{1}>p_{0}$, equation $(FPE)$ has a solution.
\end{theorem}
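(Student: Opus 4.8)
The plan is to transfer the problem to the product space $H\times E$, confine all the noise to a single component, solve the resulting auxiliary Fokker-Planck equation by the Lyapunov method available in the trace-class setting of \cite{BDRRS13}, and then push the solution back to $H$. Concretely, I would introduce a second variable $z$ (morally the Ornstein-Uhlenbeck process $Z^\lambda$) and consider on $H\times E$ the Kolmogorov operator
\[
\tilde L\Phi=\partial_t\Phi+\langle Ay+f(y+z,t)+\lambda z,D_y\Phi\rangle+\langle (A-\lambda)z,D_z\Phi\rangle+\sum_{i=1}^{\infty}a^i\,\partial_{z_i}^2\Phi ,
\]
whose drift in $y$ is the transformed drift of $X-Z^\lambda$ and whose entire second-order part acts in $z$. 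For the sum map $S(y,z)=y+z$ and any $u\in D(L)$ one checks directly, using $b^i=-\alpha_i^2x_i+f^i$ and the cancellation of the $\pm\lambda z$ terms, the conjugation identity $\tilde L(u\circ S)=(Lu)\circ S$, which is moreover independent of $\lambda$. Hence the push-forward $S_\#\nu_t$ of any solution $(\nu_t)$ of the auxiliary equation started at $\mu_0\otimes\delta_0$ is a solution of (FPE). This is the ``product space'' reduction announced in the introduction.

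The purpose of the reduction is that it removes the non-trace-class obstruction. In the original problem the natural Lyapunov function $\|x\|_H^2$ produces the divergent term $\sum_i a^i\partial_{x_i}^2\|x\|_H^2=\sum_i a^i=\infty$; on the product space, however, the second-order part of $\tilde L$ acts only in $z$, so any Lyapunov function depending on $y$ alone is annihilated by it and this divergence disappears. I would therefore work with functions of the type $V(y,z)=\|y\|_H^{p_1}$, flat in $z$, for which $\tilde L V$ has no trace term, while the $z$-marginal of $(\nu_t)$ is prescribed to be the explicit law of $Z^\lambda$ --- a Gaussian with trace-class covariance $Q_t^\lambda$ and, by \eqref{assumption 4.0}, finite $E$-moments. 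In this configuration the auxiliary problem falls within the trace-class/continuity-equation framework of \cite{BDRRS13}.

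The a priori estimate feeding the Lyapunov method comes from testing the $y$-drift against $y$: using $\langle Ay,y\rangle=-\|y\|_V^2$ and \eqref{condition = assump 3} (with $v=y$, $z=z$), the term $\eta\|y\|_V^2$ is absorbed into the dissipation $-\|y\|_V^2$, leaving
\[
\tfrac{d}{dt}\|y\|_H^2+(1-\eta)\|y\|_V^2\le C\|y\|_H^2\bigl(1+\|z\|_E^2\bigr)+C\|z\|_E^{k_0}+C\lambda^2\|z\|_H^2+C ,
\]
with the analogous inequality for $\|y\|_H^{p_1}$. Grönwall then bounds $\|y\|_H^2$ by the initial mass times $\exp\!\bigl(C\int_0^T(1+\|z\|_E^2)\,dt\bigr)$, and integrating against the $z$-marginal requires this exponential of a quadratic Gaussian functional to be integrable. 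I expect this to be the main obstacle: the power $2$ on $\|z\|_E^2$ is critical, so finiteness is not automatic from Gaussianity alone. It is obtained from Fernique's theorem together with the smallness guaranteed by \eqref{assumption 4} --- that is, by fixing $\lambda$ large enough that $\int_0^T\|Z^\lambda_t\|_E^2\,dt$ is small --- which is precisely the reason \eqref{assumption 4} is imposed.

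With these uniform bounds in hand I would carry out the finite-dimensional (Galerkin) approximation of the auxiliary equation, use the Lyapunov estimate to obtain tightness and uniform $p_1$-moments of the $y$-marginals, and pass to the limit as in the trace-class theory of \cite{BDRRS13}, producing a solution $(\nu_t)$ of the auxiliary Fokker-Planck equation with $\int_0^T\!\int\|y+z\|_H^{p_0}\,d\nu_t\,dt<\infty$. Setting $\mu_t=S_\#\nu_t$ and applying the conjugation identity on the separating class $D(L)$ yields a family $(\mu_t)$ solving (FPE); the moment condition $\int_0^T\!\int_H\|x\|_H^{p_0}\mu_t(dx)\,dt<\infty$ and the initial condition $\mu_0=S_\#(\mu_0\otimes\delta_0)$ then follow at once, the gap $p_1>p_0$ furnishing the integrability margin needed to close the Grönwall estimate.
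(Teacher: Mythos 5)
Your reduction to the product space is exactly the paper's: your operator $\tilde L$ coincides with the paper's $\widetilde L$, the conjugation identity $\tilde L(u\circ S)=(Lu)\circ S$ is Step 1 of the paper's Lemma \ref{main lemma}, your Gr\"onwall inequality is the paper's (\ref{inequality like energy}), and your use of Fernique's theorem together with assumption (\ref{assumption 4}) to make $\exp\bigl(C\int_0^T(1+\|z_t\|_E^2)\,dt\bigr)$ integrable for $\lambda$ large is precisely Proposition \ref{proposition Fernique} and Lemma \ref{lemma moment estimate}. The genuine gap is the engine you propose for solving the auxiliary equation: the claim that on the product space the problem ``falls within the trace-class/continuity-equation framework of \cite{BDRRS13}'' is false. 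The second-order coefficient of $\tilde L$ is still $\mathrm{diag}(a^i)$ acting in $z$, and $\sum_i a^i=\infty$ is allowed (it is infinite in the Navier--Stokes example), so the auxiliary equation is just as non-trace-class as the original one. Your observation that a Lyapunov function flat in $z$ sees no trace term does not rescue this: such a function has no compact level sets on $H\times H$, hence cannot yield tightness of the joint laws, while any function whose level sets are compact must grow in all $z$-directions, and for such a $V$ (say containing a term $\|\Gamma z\|_H^2$ with $\Gamma$ diagonal, $\gamma_i\uparrow\infty$) one gets $\sum_i a^i\partial_{z_i}^2V=2\sum_i a^i\gamma_i=+\infty$ pointwise, e.g.\ at $z=0$ where the compensating negative Ornstein--Uhlenbeck drift term vanishes. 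So the Lyapunov obstruction persists on the product space; this is exactly why the paper does not invoke \cite{BDRRS13} there.

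There is a second, related reason your plan cannot be executed at the level of measures: your a priori estimate is inherently pathwise in $z$. Gr\"onwall with the random coefficient $C(1+\|z_t\|_E^2)$ exponentiates that coefficient along the trajectory of $z$, and ``integrating against the $z$-marginal'' only makes sense if the approximating object carries the law of the whole path $Z^\lambda_\cdot$, not merely the time-$t$ marginals $\nu_t(dy,dz)$; an analytic Lyapunov argument sees only $\int\|y\|_H^2(1+\|z\|_E^2)\,d\nu_t$, which does not close under Gr\"onwall because the correlation between $y$ and $z$ under $\nu_t$ is unknown. The paper's way out is probabilistic: for each $n$ it solves the finite-dimensional \emph{random} ODE for $V_n$ driven by the explicit OU path $Z^\lambda$, defines $\widetilde\mu^n_t$ as the law of the pair $(V_n(t),Z^\lambda_t)$ (which solves the approximating Fokker--Planck equation by It\^o's formula), obtains your estimates pathwise and then takes expectations via H\"older and Fernique, gets tightness in the $z$-variable not from a Lyapunov function but from the explicit Gaussian computation $\E\|\Gamma Z^\lambda_t\|_H^2\le\tfrac12\sum_i\gamma_i a^i/\alpha_i^2<\infty$, and finally passes to the limit with a Vitali-type uniform-integrability argument to handle the unbounded drift coefficients. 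Replacing your appeal to \cite{BDRRS13} by this explicit construction is the missing idea; with it, the rest of your outline goes through.
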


In Section \ref{section examples} we shall give two examples: the case of
measurable drift of at most  linear growth   and the 2D and 3D Navier-Stokes equations.

\section{Auxiliary Fokker-Planck equation on product space}

On finite dimensional regular functions $\widetilde{u}\left(  v,z,t\right)  $,
$\widetilde{u}:H\times H\times\left[  0,T\right]  \rightarrow\mathbb{R}$, more precisely for $\widetilde u\in D(\widetilde L),$ where
$D(\widetilde L)$ is defined analogously to $D(L)$ with $H\times H$ replacing $H$,
define the auxiliary Kolmogorov operator

\begin{align*}
(\widetilde{L}\widetilde{u})  \left(  v,z,t\right)   &
:=\frac{\partial\widetilde{u}}{\partial t}\left(  v,z,t\right)  +\sum
_{i=1}^{\infty}\left(  \left(  a^{i}\partial_{z_{i}}^{2}-\left(  \alpha
_{i}^{2}+\lambda\right)  z_{i}\partial_{z_{i}}\right)  \widetilde{u}\right)
\left(  v,z,t\right) \\
&  +\sum_{i=1}^{\infty}\left(  -\alpha_{i}^{2}v_{i}+f^{i}\left(  v+z,t\right)
+\lambda z_{i}\right)  \left(  \partial_{v_{i}}\widetilde{u}\right)  \left(
v,z,t\right)  .
\end{align*}
Here $\lambda\geq0$ is a parameter. In the simplest cases (application to drifts of at most
linear growth, for instance) we could simply take $\lambda=0$, but for
some applications we need suitable values of $\lambda$. The trick of this
parameter has been introduced in \cite{CrauelFlandoli} to prove the existence
of random attractors for the Navier-Stokes equations, and here it will be used
to prove moment estimates for solutions.

\begin{definition}
\label{def sol FPE tilde}A family of Borel probability measures $\left(
\widetilde{\mu}_{t}\left(  dv,dz\right)  \right)  _{t\in\left[  0,T\right]  }$
on $H\times H$, measurable in $t$, is a solution of the Fokker-Planck equation
on product space  
\[
\widetilde{L}^{\ast}\widetilde{\mu}=0,\qquad\widetilde{\mu}|_{t=0}
=\widetilde{\mu}_{0}, \eqno{(\widetilde{FPE})}
\]
if%
\[
\int_{0}^{T}\int_{H\times H}\left(  \left\Vert v\right\Vert _{H}^{p_{0}%
}+\left\Vert z\right\Vert _{H}^{p_{0}}\right)  \widetilde{\mu}_{t}\left(
dv,dz\right)  dt<\infty,
\]
where $p_{0}$ is given in assumption (\ref{assumption 2}), and
\[
\int_{0}^{T}\int_{H\times H}\left(  \widetilde{L}\widetilde{u}\right)  \left(
v,z,t\right)  \widetilde{\mu}_{t}\left(  dv,dz\right)  dt+\int_{H\times
H}\widetilde{u}\left(  v,z,0\right)  \widetilde{\mu}_{0}\left(  dv,dz\right)
=0
\]
for all $\widetilde{u}\in D\left(  \widetilde{L}\right)  $.
\end{definition}

\begin{remark}
\em 
\label{remark well defined}The double integral in the above formulation is
well defined. Indeed, when $\widetilde{u}\in D\left(  \widetilde{L}\right)  $,
the term $\sum_{i=1}^{\infty}\left(  \left(  a^{i}\partial_{z_{i}}^{2}-\left(
\alpha_{i}^{2}+\lambda\right)  z_{i}\partial_{z_{i}}\right)  \widetilde{u}%
\right)  \left(  v,z,t\right)  $ reduces to a finite sum and we have the bound%
\[
\left\vert \sum_{i=1}^{\infty}\left(  \left(  a^{i}\partial_{z_{i}}%
^{2}-\left(  \alpha_{i}^{2}+\lambda\right)  z_{i}\partial_{z_{i}}\right)
\widetilde{u}\right)  \left(  v,z,t\right)  \right\vert \leq C+C\left\Vert
z\right\Vert _{H}%
\]
which is integrable with respect to $\widetilde{\mu}_{t}\left(  dv,dz\right)
dt$ by the integrability assumption of the definition. Similarly, the term
\[
\sum_{i=1}^{\infty}\left(  -\alpha_{i}^{2}v_{i}+f^{i}\left(  v+z,t\right)
+\lambda z_{i}\right)  \left(  \partial_{v_{i}}\widetilde{u}\right)  \left(
v,z,t\right)
\]
reduces to a finite sum and we have the bound%
\begin{align*}
& \left\vert \sum_{i=1}^{\infty}\left(  -\alpha_{i}^{2}v_{i}+f^{i}\left(
v+z,t\right)  +\lambda z_{i}\right)  \left(  \partial_{v_{i}}\widetilde{u}%
\right)  \left(  v,z,t\right)  \right\vert \\
& \leq C\left\Vert v\right\Vert _{H}+C\sum_{i=1}^{N}\left\vert f^{i}\left(
v+z,t\right)  \right\vert +C\left\Vert z\right\Vert _{H}%
\end{align*}
for some $N>0$, and thus by our main assumptions this is dominated by
\[
  C\left\Vert v\right\Vert _{H}+C_{N,p}\left(  1+\left\Vert v\right\Vert
_{H}^{p_{0}}+\left\Vert z\right\Vert _{H}^{p_{0}}\right)  +C\left\Vert
z\right\Vert _{H},
\]
which is again integrable with respect to $\widetilde{\mu}_{t}\left(
dv,dz\right)  dt$.
\end{remark}
\begin{remark}
\label{r6'}
\em 
 Let $\pi_i:H\times H\to H,\;i=1,2,$ denote the canonical projections and fix $t\in[0,T]$. Then $\widetilde\mu_t\circ \pi_2^{-1}=$ the law of $Z_t^\lambda=N_{Q_t}$,  i.e. the mean zero Gaussian measure on $H$ with covariance operator $Q_t$, where
 $$
 Q_t:=\int_0^te^{-2s(A+\lambda)}Qds.
 $$
 Indeed, it is easy to check that $(\widetilde\mu_t\circ \pi_2^{-1})(dx)dt$ solves (FPE) with $L$ replaced by
 $$
 L_0u:=\frac{\partial u}{\partial t}+\sum_{i=1}^\infty(a^i \partial^2_{z_i}-(\alpha_i^2+\lambda)z_i)\partial _{z_i }u, 
 $$
 with domain $D(L)$ (as above) and $\mu_0=\delta_0$. But for this Fokker--Planck equation $(N_{Q_t})_{t\in[0,T]}$ is the unique solution.
\end{remark}

\begin{remark}
\em 
In Definition \ref{def sol FPE tilde} it is sufficient to assume
\[
\int_{0}^{T}\int_{H\times H}\left(  \left\Vert v+z\right\Vert _{H}^{p_{0}%
}+\left\Vert v\right\Vert _{H}+\left\Vert z\right\Vert _{H}\right)
\widetilde{\mu}_{t}\left(  dv,dz\right)  dt<\infty.
\]

\end{remark}

\begin{lemma}
\label{main lemma}If $\widetilde{\mu}_{t}\left(  dv,dz\right)  dt$ is a
solution of $(\widetilde{FPE})$ on product space and if $\widetilde{\mu}_{0}$
and $\mu_{0}$ are related by the condition%
\begin{equation}
\int_{H\times H}\varphi\left(  v+z\right)  \widetilde{\mu}_{0}\left(
dv,dz\right)  =\int_{H}\varphi\left(  x\right)  \mu_{0}\left(  dx\right)
\label{i.c.}%
\end{equation}
then $\mu_{t}\left(  dx\right)  $, defined for all $t\in\left[  0,T\right]  $
as
\begin{equation}
\int_{H}\varphi\left(  x\right)  \mu_{t}\left(  dx\right)  :=\int_{H\times
H}\varphi\left(  v+z\right)  \widetilde{\mu}_{t}\left(  dv,dz\right)
,\qquad\varphi\in C_{b}\left(  H\right)  , \label{contraction}%
\end{equation}
is a weak solution of $(FPE)$.
\end{lemma}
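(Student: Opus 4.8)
The plan is to reduce the solution property of $(FPE)$ for $\mu_t$ to that of $(\widetilde{FPE})$ for $\widetilde\mu_t$ by lifting every test function $u$ on $H$ to the test function $\widetilde u(v,z,t):=u(v+z,t)$ on $H\times H$. The decisive point, which I would verify first, is the pointwise chain-rule identity $(\widetilde L\widetilde u)(v,z,t)=(Lu)(v+z,t)$. Writing $x=v+z$, one has $\partial_{v_i}\widetilde u=\partial_{z_i}\widetilde u=(\partial_{x_i}u)(v+z,t)$, $\partial_{z_i}^2\widetilde u=(\partial_{x_i}^2 u)(v+z,t)$ and $\partial_t\widetilde u=(\partial_t u)(v+z,t)$. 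Substituting into $\widetilde L$, the diffusion term $\sum_i a^i\partial_{z_i}^2\widetilde u$ turns into $\sum_i a^i(\partial_{x_i}^2u)(v+z,t)$, while the first-order coefficient of $(\partial_{x_i}u)(v+z,t)$ becomes
\[
-(\alpha_i^2+\lambda)z_i+\bigl(-\alpha_i^2 v_i+f^i(v+z,t)+\lambda z_i\bigr)=-\alpha_i^2(v_i+z_i)+f^i(v+z,t)=b^i(v+z,t),
\]
so that the auxiliary parameter $\lambda$ cancels and $\widetilde L\widetilde u$ collapses exactly to $(Lu)(v+z,t)$.

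Before using this, I would record two routine facts. First, $\widetilde u\in D(\widetilde L)$: if $u(x,t)=u_N(\langle e_1,x\rangle,\dots,\langle e_N,x\rangle,t)$ then $\widetilde u(v,z,t)=u_N(v_1+z_1,\dots,v_N+z_N,t)$ is a finite-dimensional $C^{2,1}_b$ function on $H\times H$ vanishing at $t=T$. Second, $\mu_t$ defined by \eqref{contraction} is nothing but the image measure $\widetilde\mu_t\circ S^{-1}$ under the continuous addition map $S:H\times H\to H$, $S(v,z)=v+z$; hence it is a Borel probability measure, measurable in $t$ (since $t\mapsto\int\varphi\circ S\,d\widetilde\mu_t$ is measurable for each $\varphi\in C_b(H)$), and by the transfer theorem the identity \eqref{contraction}, stated for $\varphi\in C_b(H)$, extends to every $\widetilde\mu_t$-integrable $g$ as $\int_H g\,d\mu_t=\int_{H\times H}g\circ S\,d\widetilde\mu_t$. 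The moment bound required in Definition \ref{def sol FPE} then follows from $\|v+z\|_H^{p_0}\le 2^{p_0-1}(\|v\|_H^{p_0}+\|z\|_H^{p_0})$ and the integrability assumed in Definition \ref{def sol FPE tilde}.

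To finish, I would insert $\widetilde u$ into the defining identity of $(\widetilde{FPE})$. The space-time term reads $\int_0^T\int_{H\times H}(\widetilde L\widetilde u)(v,z,t)\,\widetilde\mu_t(dv,dz)\,dt$; using the chain-rule identity and applying the transfer theorem with $g=(Lu)(\cdot,t)$---which is $\widetilde\mu_t$-integrable by the estimate in Remark \ref{remark well defined}---this equals $\int_0^T\int_H (Lu)(x,t)\,\mu_t(dx)\,dt$. The initial term $\int_{H\times H}\widetilde u(v,z,0)\,\widetilde\mu_0(dv,dz)=\int_{H\times H}u(v+z,0)\,\widetilde\mu_0(dv,dz)$ equals $\int_H u(x,0)\,\mu_0(dx)$ by the hypothesis \eqref{i.c.} applied to $\varphi=u(\cdot,0)\in C_b(H)$. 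Adding the two pieces reproduces exactly the identity of Definition \ref{def sol FPE}, so $\mu_t$ solves $(FPE)$.

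I would regard the argument as essentially a verification rather than a genuine difficulty: the only place needing care is the justification that the pointwise identity $\widetilde L\widetilde u=(Lu)\circ S$ may be integrated against $\widetilde\mu_t\,dt$, i.e. the integrability furnished by Remark \ref{remark well defined} together with the transfer theorem for the image measure. The conceptual content is the cancellation of $\lambda$ in the displayed line above, which is precisely what makes the product-space splitting---$z$ carrying the Ornstein--Uhlenbeck noise and $v$ the shifted dynamics---compatible with the original Kolmogorov operator $L$.
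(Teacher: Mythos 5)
Your proposal is correct and follows essentially the same route as the paper's proof: the same lift $\widetilde u(v,z,t)=u(v+z,t)$, the same pointwise computation showing the $\lambda$-terms cancel so that $\widetilde L\widetilde u=(Lu)(v+z,\cdot)$, the same moment bound via $\|v+z\|_H^{p_0}\le C_{p_0}(\|v\|_H^{p_0}+\|z\|_H^{p_0})$, and the same use of \eqref{i.c.} for the initial term. Your additional care in checking $\widetilde u\in D(\widetilde L)$ and in extending \eqref{contraction} from $C_b(H)$ to unbounded $\widetilde\mu_t$-integrable integrands via the image-measure (transfer) theorem only makes explicit what the paper's Step 3 leaves implicit.
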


\begin{proof}
\textbf{Step 1}.  Let $u\in D(L)$. Define 
\[
\widetilde{u}\left(  v,z,t\right)  :=u\left(  v+z,t\right)  .
\]
Then we have
\begin{align*}
\left(  Lu\right)  \left(  v+z,t\right)    & =\frac{\partial u}{\partial
t}\left(  v+z,t\right)  +\sum_{i=1}^{\infty}a^{i}\left(  \partial_{x_{i}}%
^{2}u\right)  \left(  v+z,t\right)  \\
& +\sum_{i=1}^{\infty}b^{i}\left(  v+z,t\right)  \left(  \partial_{x_{i}%
}u\right)  \left(  v+z,t\right)
\end{align*}%
\begin{align*}
(  \widetilde{L}\widetilde{u})  \left(  v,z,t\right)   &
=\frac{\partial u}{\partial t}\left(  v+z,t\right)  +\sum_{i=1}^{\infty
}\left(  \left(  a^{i}\partial_{x_{i}}^{2}-\left(  \alpha_{i}^{2}%
+\lambda\right)  z_{i}\partial_{x_{i}}\right)  u\right)  \left(  v+z,t\right)
\\
&  +\sum_{i=1}^{\infty}\left(  -\alpha_{i}^{2}v_{i}+f^{i}\left(  v+z,t\right)
+\lambda z_{i}\right)  \left(  \partial_{x_{i}}u\right)  \left(  v+z,t\right)
\end{align*}%
\begin{align*}
&  =\frac{\partial u}{\partial t}\left(  v+z,t\right)  +\sum_{i=1}^{\infty
}a^{i}\left(  \partial_{x_{i}}^{2}u\right)  \left(  v+z,t\right)  \\
&  +\sum_{i=1}^{\infty}\left(  -\alpha_{i}^{2}\left(  v_{i}+z_{i}\right)
+f^{i}\left(  v+z,t\right)  \right)  \left(  \partial_{x_{i}}u\right)  \left(
v+z,t\right).
\end{align*}
So, we deduce%
\[
\left(  Lu\right)  \left(  v+z,t\right)  =\left(  \widetilde{L}\widetilde{u}%
\right)  \left(  v,z,t\right)  .
\]

\textbf{Step 2}. The integrability condition of Definition \ref{def sol FPE}
holds, since by definition of $\mu_{t}\left(  dx\right)  $, we have
\begin{align*}
\int_{0}^{T}\int_{H}\left\Vert x\right\Vert _{H}^{p_{0}}\mu_{t}\left(
dx\right)  dt  &  =\int_{0}^{T}\int_{H\times H}\left\Vert v+z\right\Vert
_{H}^{p_{0}}\widetilde{\mu}_{t}\left(  dv,dz\right)  dt\\
&  \leq C_{p_{0}}\int_{0}^{T}\int_{H\times H}\left(  \left\Vert v\right\Vert
_{H}^{p_{0}}+\left\Vert z\right\Vert _{H}^{p_{0}}\right)  \widetilde{\mu}%
_{t}\left(  dv,dz\right)  dt<\infty.
\end{align*}

\textbf{Step 3}. By definition of $\mu_{t}\left(  dx\right)  $, we have
\[
\int_{0}^{T}\int_{H}\left(  Lu\right)  \left(  x,t\right)  \mu_{t}\left(
dx\right)  dt=\int_{0}^{T}\int_{H\times H}\left(  Lu\right)  \left(
v+z,t\right)  \widetilde{\mu}_{t}\left(  dv,dz\right)  dt.
\]
Hence, by the previous step, with $\widetilde{u}\left(  v,z,t\right)
:=u\left(  v+z,t\right)  $,%
\[
\int_{0}^{T}\int_{H}\left(  Lu\right)  \left(  x,t\right)  \mu_{t}\left(
dx\right)  dt=\int_{0}^{T}\int_{H\times H}\left(  \widetilde{L}\widetilde{u}%
\right)  \left(  v,z,t\right)  \widetilde{\mu}_{t}\left(  dv,dz\right)  dt.
\]
This  and  (\ref{i.c.}) imply the claim of the lemma.
\end{proof}

\begin{remark}
\label{r8}
\em
For a given Borel probability measure $\mu_{0}$ on $H$ it is easy to find a
Borel probability measure $\widetilde{\mu}_{0}$ on $H\times H$ such that
(\ref{i.c.}) holds. Simply, define
\[
\widetilde{\mu}_{0}\left(  dv,dz\right)  :=\varepsilon_{\left(  0,x\right)
}\left(  dv,dz\right)  \mu_{0}\left(  dx\right)
\]
where $\varepsilon_{\left(  0,x\right)  }$ is the Dirac measure in $\left(
0,x\right)  \in H\times H$. Then clearly (\ref{i.c.}) holds. Then the second marginal of $\widetilde\mu_0$ is just $\mu_0$. Another choice with first marginal equal to $\mu_0$ is $\widetilde\mu_0=\mu_0\otimes \delta_0.$ Hence any convex combination of these two satisfies \eqref{i.c.}.
\end{remark}

Thus, to prove existence of solutions of $(FPE)$, it is sufficient
to solve the auxiliary Fokker-Planck equation  $(\widetilde{FPE})$, with suitable initial condition.

\section{Existence theorem for the auxiliary equation $\protect\widetilde{(FPE)}
$}

In this section we want to prove the existence of a solution to the equation
(called above ($\widetilde{FPE}$))
\[
\int_{0}^{T}\int_{H\times H}\left(  \widetilde{L}\widetilde{u}\right)  \left(
v,z,t\right)  \widetilde{\mu}_{t}\left(  dv,dz\right)  dt+\int_{H\times
H}\widetilde{u}\left(  v,z,0\right)  \widetilde{\mu}_{0}\left(  dv,dz\right)
=0
\]
with the initial condition $\widetilde{\mu}_{0}=\mu_{0}\otimes\delta_{0}$.
This initial condition satisfies (\ref{i.c.}). One can decompose
$\widetilde{\mu}_{0}$ in other ways (see Remark \ref{r8}).

\begin{theorem}
\label{main theorem FPE tilde}  Let  the assumptions $(\ref{assumption 1}
)$-$(\ref{assumption 4})$ hold and let $\mu_{0}$ be a Borel probability measure on $H$
such that
\[
\int_{H}\left\Vert x\right\Vert _{H}^{p_{1}}\mu_{0}\left(  dx\right)  <\infty
\]
for some $p_{1}>p_{0}$. Then there exists $\lambda_{0}\geq0$ such that for
every $\lambda\geq\lambda_{0}$ equation $(\widetilde{FPE})$ has a   solution.
\end{theorem}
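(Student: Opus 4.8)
The plan is to realize $(\widetilde{FPE})$ as the forward (Fokker--Planck) equation for the time marginals of an explicit pair of processes, exploiting the fact that $\widetilde L$ is precisely the Kolmogorov operator associated with
\[
dZ_t = (A-\lambda)Z_t\,dt + \sqrt{Q}\,dW_t,\qquad \frac{dV_t}{dt}=AV_t+f(V_t+Z_t,t)+\lambda Z_t,
\]
with $Z_0=0$ and $V_0\sim\mu_0$. Here the $z$-component is exactly the Ornstein--Uhlenbeck process $Z^\lambda$ of \eqref{e4'} (whose law is the Gaussian $N_{Q_t}$ of Remark \ref{r6'}), while the $v$-component carries no noise and solves a random evolution equation driven by $Z^\lambda$; as in Step 1 of the proof of Lemma \ref{main lemma}, one checks that $X_t:=V_t+Z_t$ then solves the original SPDE $dX_t=(AX_t+f(X_t,t))\,dt+\sqrt{Q}\,dW_t$. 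Accordingly I would take $\widetilde\mu_t:=\mathrm{Law}(V_t,Z_t)$ as the candidate solution, so that $\widetilde\mu_0=\mu_0\otimes\delta_0$ as prescribed, and verify the two defining properties of Definition \ref{def sol FPE tilde}: the moment bound, and the weak identity, the latter via It\^o's formula applied to $\widetilde u(V_t,Z_t,t)$ for cylindrical $\widetilde u$.

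Since global solvability of the $V$-equation is not available in the generality required (in particular for the $3D$ Navier--Stokes drift, where uniqueness is open), I would construct $\widetilde\mu_t$ through finite-dimensional Galerkin approximations: project the $v$-equation onto $H_n$, keeping the genuine process $Z^\lambda$ as driving noise. The resulting $V^n$ solves an ODE with coefficients locally Lipschitz of polynomial growth by \eqref{assumption 2}, and the marginal laws $\widetilde\mu^n_t:=\mathrm{Law}(V^n_t,Z^\lambda_t)$ solve the weak identity with $\widetilde L$ replaced by its $n$-th projection $\widetilde L^n$, which agrees with $\widetilde L$ on any fixed cylindrical $\widetilde u$ once $n$ is large. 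Everything then hinges on estimates uniform in $n$.

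The core a priori estimate comes from testing the $V^n$-equation against $V^n$: using $\langle Av,v\rangle=-\|v\|_V^2$ and the cancellation assumption \eqref{condition = assump 3} to absorb the term $\eta\|V^n\|_V^2$ into the coercive term, I obtain
\[
\frac{d}{dt}\|V^n_t\|_H^2 + (1-\eta)\|V^n_t\|_V^2 \le C\big(\|Z^\lambda_t\|_E^2+1+\lambda\big)\|V^n_t\|_H^2 + C\|Z^\lambda_t\|_E^{k_0}+\lambda\|Z^\lambda_t\|_H^2+C .
\]
Gronwall's inequality then bounds $\sup_{t\le T}\|V^n_t\|_H^2+\int_0^T\|V^n_t\|_V^2\,dt$ by a deterministic constant (depending on $\lambda,T$) times $(\|V_0\|_H^2+R)\exp\big(C\int_0^T\|Z^\lambda_t\|_E^2\,dt\big)$, where $R$ is a polynomial functional of $Z^\lambda$ with moments of every order. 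The delicate point, and the main obstacle, is that the random factor $\exp\big(C\int_0^T\|Z^\lambda_t\|_E^2\,dt\big)$ is a quadratic Gaussian exponential and need not be integrable --- this is precisely the ``critical'' power-$2$ phenomenon flagged in the remark after \eqref{assumption 4}. Here assumption \eqref{assumption 4} enters decisively: since $\int_0^T\|Z^\lambda_t\|_E^2\,dt\to0$ in probability as $\lambda\to\infty$ and this is a measurable quadratic functional of a Gaussian variable, Fernique's theorem forces its exponential-integrability threshold to infinity, so one can fix $\lambda_0$ with $\E\big[\exp\big(C\int_0^T\|Z^\lambda_t\|_E^2\,dt\big)\big]<\infty$ for every $\lambda\ge\lambda_0$. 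Combined with the strict gain $p_1>p_0$ in the moment of $\mu_0$, a H\"older split of $\|V_0\|_H$ against the exponential factor yields $\int_0^T\!\int_{H\times H}\big(\|v\|_H^{p_0}+\|z\|_H^{p_0}\big)\,\widetilde\mu^n_t(dv,dz)\,dt\le M$ uniformly in $n$, which is exactly the moment condition of Definition \ref{def sol FPE tilde}.

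Finally I would pass to the limit $n\to\infty$. The uniform control of $\E\int_0^T\|V^n_t\|_V^2\,dt$, the compact embedding $V\hookrightarrow H$, and the tightness of the fixed Gaussian laws of $Z^\lambda$ give tightness of the time-averaged measures $\widetilde\mu^n_t\,dt$ on $H\times H$; a Prokhorov extraction produces a weak limit $\widetilde\mu_t\,dt$. One then passes to the limit in the weak formulation. The linear and second-order terms converge directly, and for the nonlinear term $\sum_i f^i(v+z,t)\partial_{v_i}\widetilde u$, which for cylindrical $\widetilde u$ is a finite sum, continuity of $f^i$ in $x$ together with the uniform $p_0$-moment bound (yielding uniform integrability, since $p_1>p_0$) identifies the limit. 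Lower semicontinuity of moments under weak convergence transfers the uniform bound to $\widetilde\mu_t$, and the initial datum is read off from $\mathrm{Law}(\pi_nV_0,0)\to\mu_0\otimes\delta_0$, so that $\widetilde\mu_t\,dt$ solves $(\widetilde{FPE})$ for every $\lambda\ge\lambda_0$.
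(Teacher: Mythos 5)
Your proposal is correct and follows essentially the same route as the paper's own proof: the same product-space Galerkin system for $V_n$ driven by the full process $Z^{\lambda}$, the same Gronwall/energy estimate whose exponential Gaussian factor is made integrable for large $\lambda$ by a quantitative Fernique argument combined with assumption \eqref{assumption 4} (the paper's Proposition \ref{proposition Fernique}), the same H\"older split exploiting $p_{1}>p_{0}$ (the paper's Lemma \ref{lemma moment estimate}), and the same tightness-plus-uniform-integrability (Vitali-type) passage to the limit in the nonlinear term. The only deviations are minor implementation details: for tightness in the $z$-variable the paper constructs an operator $\Gamma$ with compact level sets while you use that the $z$-marginal is a fixed Gaussian law (which works), and the paper proves pointwise-in-$t$ convergence via equicontinuity to identify the limit as a family $\widetilde{\mu}_{t}\,dt$, a step you gloss over but which can be supplied by the disintegration theorem, since the time-marginal of your weak limit is Lebesgue measure.
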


The proof is done in the following subsections. By Lemma \ref{main lemma}, this
proves our main Theorem \ref{main theorem}.

\subsection{A consequence of Fernique's theorem}

\begin{proposition}
\label{proposition Fernique}For every $K>0$ there is $\lambda_0>0$ such that for all $\lambda\ge \lambda_0$
\begin{equation}
E\left[  e^{\int_{0}^{T}K\left\Vert Z_{t}^{\lambda}\right\Vert _{E}^{2}%
dt}\right]  \leq e^{\frac{1}{4}}+\frac{e^{2}}{e^{2}-1}. \label{Fernique}%
\end{equation}

\end{proposition}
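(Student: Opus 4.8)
The plan is to recognize the exponent as the squared norm of a single centered Gaussian vector and then to run the quantitative Fernique argument, using assumption \eqref{assumption 4} to make the relevant base probability small \emph{uniformly} for large $\lambda$. Indeed, $\int_0^T\|Z^\lambda_t\|_E^2\,dt=\|Z^\lambda\|_{L^2(0,T;E)}^2$, and by \eqref{assumption 4.0} $Z^\lambda$ is a centered Gaussian random variable with values in the separable Banach space $B:=L^2(0,T;E)$, so Fernique's theorem applies to $Z^\lambda$. Writing $N:=\|Z^\lambda\|_B$, the left-hand side of \eqref{Fernique} is exactly $E\big[e^{KN^2}\big]$.

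First I would record Fernique's symmetrization inequality: taking two independent copies $Z^\lambda,\widetilde Z^\lambda$ and using that $(Z^\lambda-\widetilde Z^\lambda)/\sqrt2$ and $(Z^\lambda+\widetilde Z^\lambda)/\sqrt2$ are again independent copies of $Z^\lambda$, one obtains for $0<t<s$
\[
P(N\le t)\,P(N>s)\le P\Big(N>\tfrac{s-t}{\sqrt2}\Big)^2 .
\]
I fix $t=r:=\tfrac1{2\sqrt K}$ (so that $e^{Kr^2}=e^{1/4}$) and define $t_0:=r$, $t_n:=r+\sqrt2\,t_{n-1}$, which makes $(t_n-r)/\sqrt2=t_{n-1}$. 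Iterating the inequality with $s=t_n$, $t=r$ gives $P(N>t_n)\le x_0^{2^n}$ with $x_0:=P(N>r)/(1-P(N>r))$, while summing the geometric progression gives $t_n^2\le C_0\,r^2\,2^n$ with $C_0:=2/(\sqrt2-1)^2$. I then split
\[
E\big[e^{KN^2}\big]=E\big[e^{KN^2};\,N\le t_0\big]+\sum_{n\ge1}E\big[e^{KN^2};\,t_{n-1}<N\le t_n\big].
\]
The first term is at most $e^{Kr^2}=e^{1/4}$, and the $n$-th term is at most $e^{Kt_n^2}P(N>t_{n-1})\le e^{(C_0/2)2^{n-1}}\,x_0^{2^{n-1}}$.

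Here is the decisive step. Since $r$ depends only on $K$, assumption \eqref{assumption 4} lets me choose $\lambda_0$ so large that for every $\lambda\ge\lambda_0$ one has $P(N>r)$ small enough that $x_0\le e^{-1-C_0/2}$; then each $n$-th term is bounded by $e^{-2^{n-1}}\le e^{-2(n-1)}$, and $\sum_{n\ge1}e^{-2(n-1)}=e^2/(e^2-1)$, which is precisely the bound \eqref{Fernique}.

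The main obstacle is obtaining the estimate \emph{uniformly} in $\lambda$: Fernique's theorem alone yields exponential integrability for each fixed $\lambda$, but not a constant independent of $\lambda$. The resolution is exactly that the threshold $r$ is dictated solely by $K$, so \eqref{assumption 4} furnishes the required smallness of $P(N>r)$ simultaneously for all large $\lambda$; once $x_0$ sits below the fixed level $e^{-1-C_0/2}$, the doubly exponential Fernique decay $x_0^{2^{n-1}}$ overwhelms the merely exponential growth $e^{(C_0/2)2^{n-1}}$ coming from $e^{Kt_n^2}$, irrespective of how large $C_0$ is. The only remaining routine verifications are the geometric-sum bound $t_n^2\le C_0 r^2 2^n$ and the elementary inequality $2^{n-1}\ge 2(n-1)$ for all $n\ge1$, used to pass from $e^{-2^{n-1}}$ to $e^{-2(n-1)}$.
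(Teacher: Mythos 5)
Your proof is correct, and its strategic core coincides with the paper's: both exploit the fact that the radius $r$ may be chosen depending only on $K$, so that assumption (\ref{assumption 4}) makes the tail probability $P\left(\Vert Z^{\lambda}\Vert_{L^{2}(0,T;E)}>r\right)$ small \emph{simultaneously} for all $\lambda\geq\lambda_{0}$, after which a quantitative Fernique-type estimate finishes the job. The difference lies in how that estimate is obtained. The paper cites a ready-made quantitative version of Fernique's theorem from \cite{DaPrato-Zabczyk}: if $\log\bigl((1-P(\Vert Z\Vert\leq r))/P(\Vert Z\Vert\leq r)\bigr)+32\gamma r^{2}\leq-1$, then $E[e^{\gamma\Vert Z\Vert^{2}}]\leq e^{16\gamma r^{2}}+e^{2}/(e^{2}-1)$, applied with $\gamma=K$ and $r=1/(8\sqrt{K})$. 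You instead re-derive this estimate from scratch, via the Gaussian rotation inequality $P(N\leq t)P(N>s)\leq P\bigl(N>(s-t)/\sqrt{2}\bigr)^{2}$ and the dyadic scales $t_{n}=r+\sqrt{2}\,t_{n-1}$; your smallness condition $x_{0}\leq e^{-1-C_{0}/2}$, where $x_{0}=P(N>r)/(1-P(N>r))$ is exactly the quantity appearing in the cited theorem's hypothesis, plays the same role as the paper's condition (the numerical constants differ because your $r=1/(2\sqrt{K})$ differs from the paper's $r=1/(8\sqrt{K})$, yet both routes land on the identical bound $e^{1/4}+e^{2}/(e^{2}-1)$). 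What the paper's route buys is brevity; what yours buys is a self-contained argument that makes transparent \emph{why} the bound is uniform in $\lambda$: once $x_{0}$ sits below a fixed threshold, the doubly exponential decay $x_{0}^{2^{n-1}}$ beats the exponential growth $e^{(C_{0}/2)2^{n-1}}$. All your intermediate steps check out (the iteration $P(N>t_{n})\leq x_{0}^{2^{n}}$, the geometric bound $t_{n}^{2}\leq C_{0}r^{2}2^{n}$, and $2^{n-1}\geq2(n-1)$), and your invocation of assumption (\ref{assumption 4}) is in fact cleaner than the paper's, which conflates the events $\int_{0}^{T}\Vert Z_{t}^{\lambda}\Vert_{E}^{2}\,dt\leq r$ and $\Vert Z^{\lambda}\Vert_{L^{2}(0,T;E)}\leq r$ (an inessential slip, since the assumption holds for every threshold).
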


\begin{proof}
Since $Z_{\cdot}^{\lambda}$ is a Gaussian r.v. in the Banach space
$L^{2}\left(  0,T;E\right)  $, Fernique's theorem states that there exists
$\gamma_{0}>0$ such that $$E\left[  e^{\gamma\int_{0}^{T}\left\Vert
Z_{t}^{\lambda}\right\Vert _{E}^{2}dt}\right]  <\infty$$ for all $\gamma
\in\left(  0,\gamma_{0}\right)  $. We need a   relation between
$\lambda$ and $\gamma_{0}$, so we use the following version of Fernique's theorem
(see \cite{DaPrato-Zabczyk}): given an $L^{2}\left(  0,T;E\right)  $-valued
Gaussian variable $Z$, if two real numbers $\gamma,r>0$ satisfy
\[
\log\left(  \frac{1-P\left(  \left\Vert Z\right\Vert _{L^{2}\left(
0,T;E\right)  }\leq r\right)  }{P\left(  \left\Vert Z\right\Vert
_{L^{2}\left(  0,T;E\right)  }\leq r\right)  }\right)  +32\gamma r^{2}\leq-1,
\]
then
\[
E\left[  e^{\gamma\left\Vert Z\right\Vert _{L^{2}\left(  0,T;E\right)  }^{2}%
}\right]  \leq e^{16\gamma r^{2}}+\frac{e^{2}}{e^{2}-1}.
\]

Now, given $K>0$, choose $r=\frac{1}{8\sqrt K}$. By assumption
(\ref{assumption 4}), there exists $\lambda_0>0$ such that 
$$
P\left(  \int
_{0}^{T}\left\Vert Z_{t}^{\lambda}\right\Vert _{E}^{2}dt\leq r\right)
\geq\frac{1}{e^{-3/2}+1},\quad \forall\;\lambda\ge \lambda_0.$$ Then
\[
\log\left(  \frac{1-P\left(  \int_{0}^{T}\left\Vert Z_{t}^{\lambda}\right\Vert
_{E}^{2}dt\leq r\right)  }{P\left(  \int_{0}^{T}\left\Vert Z_{t}^{\lambda
}\right\Vert _{E}^{2}dt\leq r\right)  }\right)  \leq-\frac{3}{2}.
\]
Therefore,
\[
\log\left(  \frac{1-P\left(  \left\Vert Z_{\cdot}^{\lambda}\right\Vert
_{L^{2}\left(  0,T;E\right)  }\leq r\right)  }{P\left(  \left\Vert Z_{\cdot
}^{\lambda}\right\Vert _{L^{2}\left(  0,T;E\right)  }\leq r\right)  }\right)
+32Kr^{2}\leq-1.
\]
By the previous version of Fernique's theorem we have%
\[
E\left[  e^{K\left\Vert Z_{\cdot}^{\lambda}\right\Vert _{L^{2}\left(
0,T;E\right)  }^{2}}\right]  \leq e^{\frac{1}{4}}+\frac{e^{2}}{e^{2}-1}.
\]
The proof is complete.
\end{proof}

\subsection{Approximating problem, moment estimate}

We use the same notations and objects of the previous subsection but we
enlarge, if necessary, the filtered probability space $\left(  \Omega,
\mathcal F,(\mathcal F_{t})_{t\ge 0},P\right)  $ in such a way that there exists an $\mathcal F_{0}$-measurable r.v.
$V\left(  0\right)  $ with law $\mu_{0}$. Set
\[
A_{n}x=-\sum_{i=1}^{n}\alpha_{i}^{2}\left\langle x,e_{i}\right\rangle
_{H}e_{i},\quad f_{n}\left(  x,t\right)  =\sum_{i=1}^{n}f^{i}\left(
x,t\right)  e_{i}=\pi_{n}f\left(  x,t\right)
\]
(the latter equality is true only for $x\in E$). Consider the finite
dimensional system in $\pi_{n}\left(  H\right)  $ for the unknown
$V_{n}\left(  t\right)  $, driven by the known Gaussian process $Z_{t}%
^{\lambda}$ defined in the previous section:
\[
\frac{dV_{n}\left(  t\right)  }{dt}=A_{n}V_{n}\left(  t\right)  +f_{n}\left(
V_{n}\left(  t\right)  +Z_{t}^{\lambda},t\right)  +\lambda \pi_n(Z_{t}^{\lambda
}),\quad V_{n}\left(  0\right)  =\pi_{n}V\left(  0\right)  .
\]
This is a random differential equation. For each $n\in\mathbb{N}$ and
$\lambda>0$, as a stochastic equation, it has a unique global continuous
$\mathcal F_{t}$-adapted solution $V_{n}$ (a strong solution, in the stochastic sense).
Indeed, given any continuous path of $Z_{\cdot}^{\lambda}$,  this follows from the local Lipschitz property of each $f^{i}$, and assumption (3). (See e.g. \cite[Theorem 3.1.1]{PR}).
Whence a unique global solution is established for $P$-a.e. $\omega\in\Omega$
(those for which $Z_{\cdot}^{\lambda}\left(  \omega\right)  $ is continuous)
and it is an adapted process, by uniqueness.

Notice that $Z_{t}^{\lambda}\in E$  for $dt$-a.e. $t\in[0,T]$
and $V_{n}\left(  t\right)  \in E\cap V$
(because $\pi_{n}\left(  H\right)  \subset E\cap V$) for every $t\in\left[
0,T\right]  $, with probability one. Thus we may apply the inequalities of our
assumptions with $z=Z_{t}^{\lambda}$ and $v=V_{n}\left(  t\right)  $.

Using the notations of inner product and norm of $H$ also in $H_{n}$, from
assumption (\ref{condition = assump 3}) we have
\[
\frac{1}{2}\frac{d\left\Vert V_{n}\right\Vert _{H}^{2}}{dt}-\left\langle
A_{n}V_{n},V_{n}\right\rangle _{H}\leq\left\vert \left\langle f_{n}\left(
V_{n}+Z^{\lambda},t\right)  ,V_{n}\right\rangle _{H}\right\vert +\lambda
\left\Vert Z^{\lambda}\right\Vert _{H}\left\Vert V_{n}\right\Vert _{H}%
\]%
\begin{align*}
& \leq\eta\left\langle A_{n}V_{n},V_{n}\right\rangle _{H}+C\left\Vert
V_{n}\right\Vert _{H}^{2}\left(  \left\Vert Z^{\lambda}\right\Vert _{E}%
^{2}+1\right)  \\
& +C\left\Vert Z^{\lambda}\right\Vert _{E}^{k_{0}}+C+\lambda^{2}\left\Vert
Z^{\lambda}\right\Vert _{H}^{2}+\left\Vert V_{n}\right\Vert _{H}^{2}.
\end{align*}
Just in order to unify some expressions, and without restriction, let us
assume from now on that $k_{0}\geq2$; otherwise it is only a matter of keeping
explicitly the term$\left\Vert Z^{\lambda}\right\Vert _{H}^{2}$. With possibly changing constants $C$, we have%
\begin{equation}
\frac{d\left\Vert V_{n}\right\Vert _{H}^{2}}{dt}-\left\langle A_{n}V_{n}%
,V_{n}\right\rangle _{H}\leq C\left\Vert V_{n}\right\Vert _{H}^{2}\left(
\left\Vert Z^{\lambda}\right\Vert _{E}^{2}+1\right)  +\left(  C+\lambda
^{2}\right)  \left\Vert Z^{\lambda}\right\Vert _{E}^{k_{0}}%
+C.\label{inequality like energy}%
\end{equation}
By Gronwall's lemma we get (using also $\left\Vert V_{n}\left(  0\right)
\right\Vert ^{2}_H\leq\left\Vert V\left(  0\right)  \right\Vert ^{2}_H$)%
\[
\left\Vert V_{n}\left(  t\right)  \right\Vert _{H}^{2}\leq e^{I^{\lambda
}\left(  0,t\right)  }\left\Vert V\left(  0\right)  \right\Vert ^{2}_H+\int%
_{0}^{t}e^{I^{\lambda}\left(  s,t\right)  }\left(  \left(  C+\lambda
^{2}\right)  \left\Vert Z_{s}^{\lambda}\right\Vert _{E}^{k_{0}}+C\right)  ds,
\]
where
\[
I^{\lambda}\left(  s,t\right)  =\overline{C}\int_{s}^{t}\left(  \left\Vert
Z_{r}^{\lambda}\right\Vert _{E}^{2}+1\right)  dr
\]
and thus%
\[
\left\Vert V_{n}\left(  t\right)  \right\Vert _{H}^{2}\leq e^{I^{\lambda
}\left(  0,T\right)  }\left[  \left\Vert V\left(  0\right)  \right\Vert
^{2}_H+\int_{0}^{T}\left(  \left(  C+\lambda^{2}\right)  \left\Vert
Z_{s}^{\lambda}\right\Vert _{E}^{k_{0}}+C\right)  ds\right]  .
\]
We have denoted the constant in $I\left(  s,t\right)  $ by $\overline{C}$ to
emphasize that it is not a generic constant, but the one obtained so far in
that estimate; it does not depend on $\lambda,$ neither on $n$ and nor on $\omega$.

Notice that this inequality gives us
\begin{equation}
\left\Vert V_{n}\left(  t\right)  \right\Vert _{H}\leq e^{\frac{1}%
{2}I^{\lambda}\left(  0,T\right)  }\left[  \left\Vert V\left(  0\right)
\right\Vert _H+\left(  \int_{0}^{T}\left(  \left(  C+\lambda^{2}\right)
\left\Vert Z_{s}^{\lambda}\right\Vert _{E}^{k_{0}}+C\right)  ds\right)
^{1/2}\right]  \label{ineq 1}%
\end{equation}
so we are not limited to work in the sequel with powers of $\left\Vert
V_{n}\left(  t\right)  \right\Vert _{H}$ greater than 2.

Let $p_{1}>p_{0}$ be the value given in the assumptions of the theorem.

\begin{lemma}
\label{lemma moment estimate}For every $p\in\left(  p_{0},p_{1}\right)  $,
there exists $\lambda_{p}\geq0$ and $C_{p}>0$ such that, using the process
$Z^{\lambda_{p}}$ in the previous construction, we have%
\begin{equation}
E\left[\sup_{t\in[0,T]}  \left\Vert V_{n}\left(  t\right)  \right\Vert _{H}^{p}\right]  \leq
C_{p} \label{moment estimate}%
\end{equation}
for every $n\in\mathbb{N}$.
\end{lemma}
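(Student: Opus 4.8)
The plan is to start directly from the pathwise bound \eqref{ineq 1}, whose right-hand side does not depend on $t$; this means $\sup_{t\in[0,T]}\|V_n(t)\|_H$ is controlled without invoking any martingale (Doob or Burkholder) inequality. Writing $R_\lambda:=\int_0^T\big((C+\lambda^2)\|Z_s^\lambda\|_E^{k_0}+C\big)\,ds$ and raising \eqref{ineq 1} to the power $p$ (using $(a+b)^p\le 2^{p-1}(a^p+b^p)$), I obtain the pointwise inequality
$$\sup_{t\in[0,T]}\|V_n(t)\|_H^p\le 2^{p-1}e^{\frac{p}{2}I^\lambda(0,T)}\big(\|V(0)\|_H^p+R_\lambda^{p/2}\big),$$
in which the constants $\overline{C}$, $C$ and the exponent $k_0$ are independent of $n$. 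Taking expectations reduces the lemma to estimating $E\big[e^{\frac p2 I^\lambda(0,T)}\|V(0)\|_H^p\big]$ and $E\big[e^{\frac p2 I^\lambda(0,T)}R_\lambda^{p/2}\big]$ separately, and uniformly in $n$ (which is automatic, since neither quantity involves $n$).

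For the first term I would exploit independence: $\|V(0)\|_H^p$ is $\mathcal F_0$-measurable, whereas $I^\lambda(0,T)=\overline C\,T+\overline C\int_0^T\|Z_r^\lambda\|_E^2\,dr$ is a functional of the driving noise alone, and since the $\beta_i$ are $(\mathcal F_t)$-Wiener processes the process $Z_\cdot^\lambda$ is independent of $\mathcal F_0$, hence of $V(0)$. The expectation therefore factorizes as $E\big[e^{\frac p2 I^\lambda(0,T)}\big]\cdot E\big[\|V(0)\|_H^p\big]$, and $E[\|V(0)\|_H^p]=\int_H\|x\|_H^p\,\mu_0(dx)<\infty$ because $p<p_1$ and $\mu_0$ has finite $p_1$-th moment. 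Pulling out $e^{\frac p2\overline C T}$, the exponential factor is controlled by Proposition \ref{proposition Fernique} applied with $K=p\overline C$: there is $\lambda_p\ge 0$ such that for all $\lambda\ge\lambda_p$ one has $E\big[e^{p\overline C\int_0^T\|Z_r^\lambda\|_E^2\,dr}\big]\le e^{1/4}+\frac{e^2}{e^2-1}$, and since $\frac p2\overline C\le p\overline C$ with nonnegative integrand the same bound dominates $E\big[e^{\frac p2\overline C\int_0^T\|Z_r^\lambda\|_E^2\,dr}\big]$. Using independence here (rather than a further Hölder split) is exactly what keeps the demand on $\mu_0$ at order precisely $p<p_1$, matching the hypothesis.

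For the second term both factors depend on $Z_\cdot^\lambda$, so I would apply the Cauchy--Schwarz inequality to get $E\big[e^{\frac p2 I^\lambda(0,T)}R_\lambda^{p/2}\big]\le \big(E[e^{pI^\lambda(0,T)}]\big)^{1/2}\big(E[R_\lambda^{p}]\big)^{1/2}$. The first factor is finite for $\lambda\ge\lambda_p$ again by Proposition \ref{proposition Fernique} with $K=p\overline C$, since $e^{pI^\lambda(0,T)}=e^{p\overline C T}e^{p\overline C\int_0^T\|Z_r^\lambda\|_E^2\,dr}$. The second factor is finite because, up to constants depending on $\lambda_p,C,T$, one has $R_\lambda^p\le C'\big(1+(\int_0^T\|Z_s^{\lambda}\|_E^{k_0}\,ds)^p\big)=C'\big(1+\|Z^{\lambda}\|_{L^{k_0}(0,T;E)}^{p k_0}\big)$, and by assumption \eqref{assumption 4.0} $Z_\cdot^{\lambda}$ is a Gaussian variable in $L^{2\vee k_0}(0,T;E)\hookrightarrow L^{k_0}(0,T;E)$ (finite time interval), so Fernique's theorem gives finite exponential moments of $\|Z^{\lambda}\|_{L^{k_0}(0,T;E)}^2$ and hence all finite polynomial moments, in particular $E\big[\|Z^{\lambda}\|_{L^{k_0}(0,T;E)}^{pk_0}\big]<\infty$.

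Collecting these bounds and fixing $\lambda=\lambda_p$ yields a constant $C_p$, depending on $p,\overline C,C,T,k_0,\lambda_p$ and on the $p_1$-th moment of $\mu_0$ but not on $n$, which establishes \eqref{moment estimate}. The main obstacle is the genuine coupling, inside a single expectation, of an exponential functional of $\int_0^T\|Z^\lambda\|_E^2$ with a polynomial functional of $Z^\lambda$ and $V(0)$; this is resolved by combining (i) Proposition \ref{proposition Fernique}, which forces $\lambda$ to be large enough that the exponential moments are bounded uniformly in $n$, with (ii) the independence of $V(0)$ from the noise, which confines the moment requirement on $\mu_0$ to order $p<p_1$.
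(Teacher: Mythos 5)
Your proposal is correct, and it diverges from the paper's argument at one genuine point. The paper starts from the same pointwise bound (raising \eqref{ineq 1} to the power $p$, with the $t$-independent right-hand side controlling the supremum), but then handles the whole expectation by a single H\"older inequality with conjugate exponents $q,q'$ chosen so that $pq'=p_1$: this bounds
$E\bigl[e^{\frac p2 I^\lambda(0,T)}\|V(0)\|_H^p\bigr]$ by
$E\bigl[e^{\frac{pq}{2}I^\lambda(0,T)}\bigr]^{1/q}E\bigl[\|V(0)\|_H^{p_1}\bigr]^{1/q'}$,
and similarly for the noise term, after which Proposition \ref{proposition Fernique} (with a $\lambda$ depending on $p,q,\overline C$) and assumption \eqref{assumption 4.0} finish the proof exactly as in your third paragraph. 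You instead split the two terms differently: for the initial-data term you invoke independence of $V(0)$ from $Z^\lambda_\cdot$ to factorize the expectation, and for the noise term you use Cauchy--Schwarz. Both routes are valid, and the trade-off is instructive. Your factorization requires only the $p$-th moment of $\mu_0$, so it would in fact work at $p=p_1$ itself, whereas the paper's H\"older step is precisely what forces the strict inequality $p<p_1$ (it needs $q'=p_1/p<\infty$). The price is that you use a structural fact the paper's argument never needs: that $V(0)$, being $\mathcal F_0$-measurable, is independent of $Z^\lambda_\cdot$. This is legitimate here --- the $\beta_i$ are Wiener processes with respect to the normal filtration $(\mathcal F_t)$, so the noise on $[0,T]$ (equivalently the cylindrical process $W$) is independent of $\mathcal F_0$, and the standard enlargement of the space producing $V(0)$ is a product construction preserving this --- but it deserves an explicit sentence, since "enlarge the filtered space so that an $\mathcal F_0$-measurable $V(0)$ with law $\mu_0$ exists" only yields the independence through this (standard but unstated) mechanism; the paper's H\"older argument is insensitive to any correlation between $V(0)$ and the noise. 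In both proofs the resulting constant is independent of $n$ for the same reason: nothing on the right-hand side of \eqref{ineq 1} involves $n$.
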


\begin{proof}
From inequality (\ref{ineq 1}) we deduce%
\[
\left\Vert V_{n}\left(  t\right)  \right\Vert _{H}^{p}\leq C_{p,T}e^{\frac
{p}{2}I^{\lambda}\left(  0,T\right)  }\left[  \left\Vert V\left(  0\right)
\right\Vert _H ^{p}+\left(  \int_{0}^{T}\left(  \left(  C+\lambda^{2}\right)
\left\Vert Z_{s}^{\lambda}\right\Vert _{E}^{k_{0}}+C\right)  ds\right)
^{p/2}\right]
\]
hence, for every $q,q^{\prime}\in\left(  1,\infty\right)  $ such that
$\frac{1}{q}+\frac{1}{q^{\prime}}=1$,%
\[
E\left[\sup_{t\in[0,T]} \left\Vert V_{n}\left(  t\right)  \right\Vert _{H}^{p}\right]  \leq
C_{p,q,T}E\left[  e^{\frac{pq}{2}I^{\lambda}\left(  0,T\right)  }\right]
^{1/q}E\left[  \left\Vert V\left(  0\right)  \right\Vert ^{pq^{\prime}%
}\right]  ^{1/q^{\prime}}%
\]%
\[
+C_{p,q,T}E\left[  e^{\frac{pq}{2}I^{\lambda}\left(  0,T\right)  }\right]
^{1/q}E\left[  \left(  \int_{0}^{T}\left(  \left(  C+\lambda^{2}\right)
\left\Vert Z_{s}^{\lambda}\right\Vert _{E}^{k_{0}}+C\right)  ds\right)
^{pq^{\prime}/2}\right]  ^{1/q^{\prime}}.
\]
Choose $q^{\prime}$ such that $pq^{\prime}=p_{1}$; this gives us $E\left[
\left\Vert V\left(  0\right)  \right\Vert ^{pq^{\prime}}_H\right]  <\infty$.
Choose $\lambda$, depending on $p,q,\overline{C}$, such that $E\left[
e^{\frac{pq}{2}I^{\lambda}\left(  0,T\right)  }\right]  <\infty$:\ this is
possible because of Proposition \ref{proposition Fernique}. Finally, for those
values of the parameters, the last expected value of the last inequality is
finite because $Z_{\cdot}^{\lambda}$ is a Gaussian variable in $L^{k_{0}%
}\left(  0,T;E\right)  $, see assumption (\ref{assumption 4.0}). The proof is complete.
\end{proof}

\subsection{An additional estimate}

Inequality (\ref{inequality like energy}) also gives us%
\begin{align*}
-\int_{0}^{T}\left\langle A_{n}V_{n},V_{n}\right\rangle _{H}ds  &
\leq\left\Vert V_{n}\left(  0\right)  \right\Vert _{H}^{2}+C\int_{0}%
^{T}\left\Vert V_{n}\right\Vert _{H}^{2}\left(  \left\Vert Z^{\lambda
}\right\Vert _{E}^{2}+1\right)  ds\\
&  +\left(  C+\lambda^{2}\right)  \int_{0}^{T}\left\Vert Z^{\lambda
}\right\Vert _{E}^{k_{0}}ds+CT.
\end{align*}
Recalling the definition of the Hilbert space $V$ given in the introduction,
we have proved   our additional estimate:%
\begin{align*}
\int_{0}^{T}\left\Vert V_{n}\left(  s\right)  \right\Vert _{V}^{2}ds  &
\leq\left\Vert V_{n}\left(  0\right)  \right\Vert _{H}^{2}+C\int_{0}%
^{T}\left\Vert V_{n}\right\Vert _{H}^{2}\left(  \left\Vert Z^{\lambda
}\right\Vert _{E}^{2}+1\right)  ds\\
&  +\left(  C+\lambda^{2}\right)  \int_{0}^{T}\left\Vert Z^{\lambda
}\right\Vert _{E}^{k_{0}}ds+CT.
\end{align*}
In fact, for our later purposes we may simplify it as follows:%
\begin{equation}
\label{e13}
\int_{0}^{T}\left\Vert V_{n}\left(  s\right)  \right\Vert _{V}^{2}ds\leq
C\sup_{\left[  0,T\right]  }\left\Vert V_{n}\right\Vert _{H}^{4}+ \left( \int_{0}^{T}\left\Vert Z^{\lambda}\right\Vert
_{E}^{k_{0}\vee2}ds\right)^2+C
\end{equation}
for all $n\in\N$ with a new   constant $C$, depending also on $\lambda$ and $T$, but independent of $n$ and $\omega$.

\subsection{Approximating Fokker-Planck equation}

Given $n\in\mathbb{N}$ define $D(\widetilde L_n)$ to be the span of all functions $\widetilde u_n: H_n\times H\times [0,T]\to \R$ such that
$$
\widetilde u_n(v,z,t)=\widetilde u_{n,N}(v,\langle e_1,z  \rangle,...,\langle e_N,z  \rangle, t),
$$
for some $\widetilde u_{n,N}\in C^{2,1}_b (H_n\times H_N\times[0,T])$ (i.e. $C^2$ on $H_n\times H_N$ and $C^1$ on $[0,T]$) and $\widetilde u_n(v,z,T)=0.$
Consider on $H_{n}\times H$ the Fokker-Planck
equation
$$
\begin{array}{l}
\ds\int_{0}^{T}\int_{H_{n}\times H}\left(  \widetilde{L}_{n}\widetilde{u}
_{n}\right)  \left(  v,z,t\right)  \widetilde{\mu}_{t}^{n}\left(
dv,dz\right)  dt\\
\\
\ds +\int_{H_{n}\times H}\widetilde{u}_{n}\left(  v,z,0\right)
\widetilde{\mu}_{0}^{n}\left(  dv,dz\right)  =0,\quad\forall\;\widetilde{u}
_{n}\in D(\widetilde{L}
_{n}),
\end{array}\eqno{(\widetilde{FPE^n})}
$$
where for $(v,z,t)\in H_n\times H\times [0,T]$
\begin{align*}
\left(  \widetilde{L}_{n}\widetilde{u}_{n}\right)  \left(  v,z,t\right)   &
:=\frac{\partial\widetilde{u}_{n}}{\partial t}\left(  v,z,t\right)
+\sum_{i=1}^{\infty}\left(  \left(  a^{i}\partial_{z_{i}}^{2}-\left(
\alpha_{i}^{2}+\lambda\right)  z_{i}\partial_{z_{i}}\right)  \widetilde{u}%
_{n}\right)  \left(  v,z,t\right) \\
&  +\sum_{i=1}^{n}\left(  -\alpha_{i}^{2}v_{i}+f^{i}\left(  v+z,t\right)
+\lambda z_{i}\right)  \left(  \partial_{v_{i}}\widetilde{u}_{n}\right)
\left(v,z,t\right).
\end{align*}
 The initial measure
$\widetilde{\mu}_{0}^{n}$ is, by definition, the projection on $H_{n}\times H$
of the given initial datum $\widetilde{\mu}_{0}$.

Let $\left(  V_{n},Z^{\lambda}\right)  $ be the process constructed in the
previous section. 
For $n\in \N$, $t\in[0,T]$, define $\widetilde{\mu_t^n}(dv,dz)$ to be the law of $(V_n(t), Z_t^\lambda)$. Then clearly $\widetilde{\mu_t^n}(dv,dz)dt$ solves  $(\widetilde{FPE^n})$ by It\^o's formula. Replacing $\widetilde{u}_n(v,z,t)$ in $(\widetilde{FPE^n})$ by $\varphi(t)\widetilde{u}_n(v,z,t)$ for $\varphi\in C_0([0,T))$ we easily see that $(\widetilde{FPE^n})$ is equivalent to
$$
\begin{array}{l}
\ds \int_{H_n\times H}u(v,z,t)\widetilde{\mu}^{n}_t(dv,dz)=\int_{H_n\times H}u(v,z,0)\widetilde{\mu}^{n}_0(dv,dz)\\
\\
\ds+\int_0^t\int_{H_n\times H} \widetilde{L}_{n}u(v,z,s)\widetilde{\mu}^{n}_s(dv,dz)ds,\quad \forall\;t\in[0,T],\;u\in D(\widetilde{L}^{n}).
\end{array} 
$$   
(see \cite[Remark 1.2]{BDR11} for details.)
Hence an easy consideration shows that the above equation also holds
for all $\varphi\in \mathcal F C^2_b$, i.e. all functions $\varphi: H\times H\to \R$ of the form
$$
\varphi(v,z)=\varphi_N(\langle e_1,v  \rangle,...,\langle e_N,v   \rangle,\langle e_1,z  \rangle,...,\langle e_N,z  \rangle),
$$
where $N\in\N$, $\varphi_N\in C^2_b(\R^N\times\R^N)$. Hence it follows from Remark \ref{remark well defined} and Lemma \ref{lemma moment estimate} that for every $\varphi\in \mathcal F C^2_b$ the $\R$-valued maps   
$$
t\to \widetilde{\mu}^{n}_t(\varphi):=\int_{H_n\times H}\varphi(v,z)\,\widetilde{\mu}^{n}_t(dv,dz),\quad n\in\N,
$$
are equicontinuous on $[0,T].$    

\subsection{Passage to the limit}

{\bf Step 1}. Convergence of a subsequence of $\widetilde{\mu}^{n}_t$, $n\in\N,$ for all $t\in[0,T].$ \medskip

First we extend $\widetilde{\mu}^{n}_t$ from a measure on $H_n\times H$ to a measure on $H\times H$ as follows. Let $H_n^\perp$ be the orthogonal complement of $H_n$ in $H$, $\delta_0$ the Dirac measure on $H_n^\perp$  with mass at $0\in H_n^\perp$ and $\Lambda: H_n\times H_n^\perp\times H \to H\times H$ defined by
$$
\Lambda((v_n,v_n^\perp,z))=(v_n+v_n^\perp,z),\quad v_n\in H_n,\;v_n^\perp\in H_n^\perp,\;z\in H.
$$
Then the image under $\Lambda$ of the measure $\widetilde{\mu}^{n}_t(dv_n,dz)\otimes \delta_0(dv_n^\perp)$ extends $\widetilde{\mu}_t^{n}$ to $H\times H$. Let us also denote this extension by $\widetilde{\mu}_t^{n}$. Then we have for any integrable function $g:H\times H\to \R$
\begin{equation}
\label{e16r}
\begin{array}{l}
\ds \int_{H\times H}g(v,z)\widetilde{\mu}_t^{n}(dv,dz)\\
\\
\ds=
\int_{H_n\times H_n^\perp\times  H}g(v_n+v_n^\perp,z)\widetilde{\mu}_t^{n}(dv_n,dz)
\otimes \delta_0(dv_n^\perp)\\
\\
\ds =
\int_{H_n\times  H}g(v_n,z)\widetilde{\mu}_t^{n}(dv_n,dz)
\otimes \delta_0(dv_n^\perp)\\
\\
\ds=\int_{H\times H}g(\pi_n(v),z)\widetilde{\mu}_t^{n}(dv,dz).
\end{array} 
\end{equation}
Furthermore,  by Remark \ref{r6'} and Lemma \ref{lemma moment estimate} for each $t\in [0,T]$
\begin{equation}
\label{e17r}
\sup_{n\in\N}\int_{H\times H}(\|v\|_H^p+\|z\|_H^2)\widetilde{\mu}_t^{n}(dv,dz)<+\infty,
\end{equation}
for any $p$ given in Lemma \ref{lemma moment estimate}.
Closed balls in $H\times H$ are compact and metrizable with respect to the weak topology $\tau_w$. Hence by \cite[Theorem 8.6.7]{B07}
and a diagonal argument we can find a subsequence $\widetilde{\mu}_t^{n_k},\;k\in\N$, such that $(\widetilde{\mu}_t^{n_k})$ converges $\tau_w$-weakly to a probability measure $\widetilde{\mu}_t$ on $H\times H$ as $k\to\infty$ for all $t\in [0,T]\cap\Q$. Now let $t\in[0,T]\setminus \Q$. We claim that also for such $t$ the sequence $(\widetilde{\mu}_t^{n_k})$ converges $\tau_w$-weakly to some probability measure $\widetilde{\mu}_t$ on $H\times H$. Since by \eqref{e17r} also $(\widetilde{\mu}_t^{n_k})$ has $\tau_w$-convergent subsequences, we only have to identify the limit points. So,  let $n_{k_l},\;l\in\N,$
be a subsequence such that $(\widetilde{\mu}_t^{n_{k_l}})$ $\tau_w$-weakly  converges to some probability measure $\widetilde{\nu}_t$ on $H\times H$ as $l\to\infty$. Then by the equicontinuity proved in Section 4.4 we have for all $\varphi\in \mathcal FC^2_b$ (which are all weakly continuous)
$$
\begin{array}{l}
\ds\int_{H\times H}\varphi \,d\widetilde{\nu}_t=\lim_{l\to\infty}\int_{H\times H}\varphi \,d\widetilde{\mu}^{n_{k_l}}_t\\
\\
\ds
=\lim_{l\to\infty}\lim_{s\to t,\;s\in\Q}\int_{H\times H}\varphi \,d\widetilde{\mu}^{n_{k_l}}_s= \lim_{s\to t,\;s\in\Q}\int_{H\times H}\varphi \,d\widetilde{\mu}_s.
\end{array} 
$$
Since $\mathcal FC^2_b$ is a measure separating class, this proves our claim.\medskip

{\bf Step 2}. Convergence of a subsequence of $\widetilde{\mu}_t^n(dv,dz)dt$.\medskip

Let $(\gamma_i)\subset [1,\infty)$ such that
$$
\gamma_i\uparrow\infty,\quad\sum_{i=1}^\infty \gamma_i\frac{a_i}{\alpha_i^2}<\infty.
$$
Define the operator $\Gamma$ on $H$ by
$$
\left\{\begin{array}{l}
\ds\Gamma x:=\sum_{i=1}^\infty\gamma_i^{1/2} \langle x,e_i  \rangle e_i,\\
\\
\ds D(\Gamma):=\left\{x\in H:\;\sum_{i=1}^\infty\gamma_i  \langle x,e_i  \rangle^2<\infty   \right\}   
\end{array}\right. 
$$
Then $\Gamma$ has compact level sets and furthermore
$$
\begin{array}{lll}
\ds \E[\|\Gamma(Z_t^\lambda)\|_H^2]&=&\ds\sum_{i=1}^\infty\gamma_ia^i\int_0^te^{-2(t-s)(\alpha_i^2+\lambda)} ds \\
\\
&=&\ds\frac12\sum_{i=1}^\infty \gamma_i\,\frac{a^i}{\alpha_i^2+\lambda}(1-e^{-2t(\alpha_i^2+\lambda)})\\
\\
&\le&\ds\frac12\sum_{i=1}^\infty \gamma_i\,\frac{a^i}{\alpha_i^2}=:C_\gamma<\infty.
\end{array} 
$$
Hence it follows from \eqref{assumption 4.0}, \eqref{e13} and Lemma \ref{lemma moment estimate}
$$
\sup_{n\in\N}\int_0^T\int_{H\times H}(\|v\|_V^2+\|\Gamma z\|_H^2)\,\widetilde{\mu}^n_t(dv,dz)dt<\infty.
$$
But the function $(v,z)\to \|v\|_V^2+\|\Gamma v\|_H^2$
has compact level sets on $H\times H$, hence (selecting a  subsequence if necessary) $\widetilde{\mu}^{n_k}_t(dv,dz)dt$, where $n_k,\;k\in\N,$ is as in Step 1, weakly converges to a finite measure
$\widetilde{\mu} (dt,dv,dz)$ on $[0,T]\times H\times H$ as $k\to\infty.$ But for $u\in D(\widetilde{L})$ we then have by Lebesgue's dominated convergence theorem
$$
\begin{array}{l}
\ds \int_0^T\int_{H\times H}u(v,z,t)\widetilde{\mu}_t(dv,dz)dt=\lim_{k\to\infty}\int_0^T\int_{H\times H}u(v,z,t)\,\widetilde{\mu}^{n_k}_t(dv,dz)dt\\
\\
\ds=\int_0^T\int_{H\times H}u(v,z,t)\widetilde{\mu}(dt,dv,dz).
\end{array} 
$$
Since $D(\widetilde{L})$ is a measure separating class, it follows that $\widetilde{\mu}(dt,dv,dz)=\widetilde{\mu}_t(dv,dz)dt$. So
$\widetilde{\mu}^{n_k}_t(dv,dz)dt\to \widetilde{\mu}_t(dv,dz)dt$
weakly on $H\times H\times [0,T]$ as $k\to\infty.$\medskip

\textbf{Step 3}. Passage to the limit. \medskip

Just to simplify notations, assume
that the whole sequence \break $(\widetilde{\mu}^{n}_t(dv,dz)dt)_{n\in\N}$
weakly converges to $\widetilde{\mu}_t(dv,dz)dt$ on $[0,T]\times H\times H$.
  We have to prove
that $\widetilde{\mu}_{t}\left(dv,dz\right)  dt$ is a solution of 
$(\widetilde{FPE})$. Since we have $\widetilde{\mu}^{n}_0\to \widetilde{\mu}_0$ weakly on $H\times H$ we only need to prove that
\begin{align*}
&  \lim_{n\rightarrow\infty}\int_{0}^{T}\int_{H\times H}(
\widetilde{L}_n\widetilde{u} )  \left(  v,z,t\right)  \widetilde{\mu
}_{t}^{n}\left(  dv,dz\right)  dt\\
&  =\int_{0}^{T}\int_{H\times H} (  \widetilde{L}\widetilde{u} )
\left(  v,z,t\right)  \widetilde{\mu}_{t}\left(  dv,dz\right)  dt
\end{align*}
for all $\widetilde{u}\in D(\widetilde L)$. Below we fix such a $\widetilde{u}$. Let $m\in\N$ such that
$$
\widetilde{u}(v,z,t)=\widetilde{u}(\pi_m v,\pi_mz,t)
$$
 and let $n\ge m$.
By \eqref{e16r}  the above equation follows from
\begin{equation}
\label{e18r}  
\begin{array}{l}
\ds\lim_{n\to\infty,n\ge m}\int_{0}^{T}\int_{H\times H}\psi(v,z,t)\widetilde{\mu
}_{t}^{n}(dv,dz)dt\\
\\
\ds =\int_{0}^{T}\int_{H\times H}\psi(v,z,t)\widetilde{\mu
}_{t}(dv,dz)dt,
\end{array}
\end{equation}
where
$$
\begin{array}{lll}
\ds \psi(v,z,t)&=&\ds-\sum_{i=1}^m(\alpha_i^2+\lambda)z_i\partial_{z_i}\widetilde{u}(\pi_m v,\pi_m z,t)\\
\\
&&\ds+\sum_{i=1}^m(-\alpha_i^2v_i+\lambda z_i)\partial_{v_i}\widetilde{u}(\pi_m v,\pi_m z,t)\\
\\
&&\ds+\sum_{i=1}^mf^i(v+z,t)\partial_{v_i}\widetilde{u}(v, z,t).
\end{array} 
$$
Note that  $\psi$ is a continuous function, but not bounded. So, we cannot pass to the limit in \eqref{e18r}  just by the weak convergence of 
$\widetilde{\mu}_{t}^{n}dt$ to $\widetilde{\mu}_{t}dt$ on $H\times H\times [0,T]$ proved in Step 2 above. But we can argue similarly as in the proof of Vitali's theorem. Let us give the details.

By assumption \eqref{assumption 2} we have
\begin{equation}
\label{e19r}
|\psi(v,z,t)|\le c(1+\|v\|^{p_0}_H+\|z\|^{p_0}_H).
\end{equation}
For $R\in(0,\infty)$ we define
$$
\psi_R:=\psi\wedge R\vee (-R),
$$
then
$$
\psi=\psi_R+(\psi-\psi_R),
$$
where
$$|\psi-\psi_R|\le \one_{\{|\psi|\ge R\}}2|\psi|.$$
Hence by Step 2 it suffices to show that
\begin{equation}
\label{e20r}
\limsup_{R\to\infty}\sup_{n\in\N}\int_{0}^{T}\int_{H\times H}\one_{\{|\psi|\ge R\}}|\psi|\,d\widetilde{\mu}^n_{t}\,dt=0
\end{equation}
and
\begin{equation}
\label{e21r}
\limsup_{R\to\infty} \int_{0}^{T}\int_{H\times H}\one_{\{|\psi|\ge R\}}|\psi|\,d\widetilde{\mu}_{t}\,dt=0.
\end{equation}
But we have by the H\"older inequality for every $\delta\in(0,\infty)$
$$
\begin{array}{lll}
\ds\int_{0}^{T}\int_{H\times H}\one_{\{|\psi|\ge R\}}|\psi|\,d\widetilde{\mu}^n_{t}\,dt&\le&\ds \left( \int_{0}^{T}\int_{H\times H} |\psi|^{1+\delta}\,d\widetilde{\mu}^n_{t}\,dt  \right)^{\frac1{1+\delta}}\\
\\
\ds &\times&\ds\left(\frac1R \int_{0}^{T}\int_{H\times H} |\psi|\,d\widetilde{\mu}^n_{t}\,dt  \right)^{\frac\delta{1+\delta}}.
\end{array} 
$$
By Lemma \ref{lemma moment estimate} and \eqref{e19r} this implies \eqref{e20r}.
Note that by Step 2 and Lemma \ref{lemma moment estimate} it follows by lower continuity that
$$
\int_0^T\int_{H\times H}(\|v\|_H^p+\|z\|_H^p)\widetilde{\mu}_t(dv,dz)dt<\infty,
$$
for all $p\in (p_0,p_1)$. Hence \eqref{e21r} follows similarly (even easier) as \eqref{e20r} above. \hfill $\Box$

\section{Examples\label{section examples}}

\subsection{Measurable linear growth drift}

Let $f:H\times\left[  0,T\right]  \rightarrow H$ be a measurable map such that
$$\|f(x,t)\|_H\le C(1+\|x\|_H),\quad t\in[0,T],\;x\in H
$$
for some constant $C>0$. Denote by $f^{i}\left(  t,x\right)  $ its components.
Assume that (\ref{assumption 1}) holds. Then, with $E=H$, also the other
assumptions hold. The proof of assumptions (\ref{assumption 2}) and
(\ref{assumption 4.0}) is elementary. We have
$$
\langle f(v+z,t),v\rangle_H\le C(1+\|v+z\|_H)\,\|v\|_H\le C' \|v\|_H^2+C'\|z\|_H^2+C''
$$
which implies (\ref{condition = assump 3}).  Finally,
\[
 E[\|Z_{t}^{\lambda}\|_H^2]   =\sum
_{i=1}^{\infty}\left(  1-e^{-2t\left(  \alpha_{i}^{2}+\lambda\right)
}\right)  \frac{a^{i}}{2\left(  \alpha_{i}^{2}+\lambda\right)  }\leq\sum
_{i=1}^{\infty}\frac{a^{i}}{2\left(  \alpha_{i}^{2}+\lambda\right)  }\quad .
\]%
Hence
\begin{align*}
P\left(  \int_{0}^{T}\left\Vert Z_{t}^{\lambda}\right\Vert _{E}^{2}
dt>r^{2}\right)   &  \leq r^{-2}E\left[\int_0^T\|Z_{t}^{\lambda}\|_H^2\,dt\right] =r^{-2}\int_{0}^{T}E\left[ \|Z_{t}^{\lambda}\|_H^2 \right]  dt\\
&  =\frac{1}{2}r^{-2}T\sum_{i=1}^{\infty}\frac{a^{i}}{\alpha_{i}^{2}+\lambda}
\end{align*}
which implies (\ref{assumption 4}). Note that  (4)  follows from (1). Hence Theorem \ref{main theorem} holds.

\begin{remark}
\label{r14}
\em  Similarly as in   \cite{RZZ13} we can also treat stochastic partial differential equations
on $H=L^2(0,1)$ whose drift is the sum of the Dirichlet Laplacian, a reaction--diffusion
and a  Burgers type part. However, in contrast to  \cite{RZZ13} (and also \cite{BDR2}, \cite{BDR11})  we need to assume that the reaction part is of at most quadratic growth. The details are straight forward.

\end{remark}

\subsection{Navier-Stokes equations}

Consider the stochastic Navier-Stokes equations
\begin{align*}
du+\left(  u\cdot\nabla u+\nabla p\right)  dt &  =\nu\Delta udt+\sum
_{i=1}^{\infty}\sqrt{a^{i}}e_{i}d\beta_{i}\left(  s\right)  \\
\operatorname{div}u &  =0\\
u|_{t=0} &  =u_{0}%
\end{align*}
on the torus $D=\left[  0,2\pi\right]  ^{d}$, $d=2,3$, with periodic boundary conditions.

We introduce the Hilbert space $H$ defined as the closure in $L^{2}\left(
D,\mathbb{R}^{d}\right)  $ of the set of all $\varphi\in C^{\infty}\left(
D,\mathbb{R}^{d}\right)  $ which satisfy the periodic boundary conditions and
$\operatorname{div}\varphi=0$; $H$ is a closed strict subspace of
$L^{2}\left(  D,\mathbb{R}^{d}\right)  $ and we shall denote the orthogonal
projection from $L^{2}\left(  D,\mathbb{R}^{d}\right)  $ to $H$ by $P_{H}$. We
assume $u_{0}\in H$. We introduce also the Hilbert space $V$ of all periodic
$\varphi\in H^{1}\left(  D,\mathbb{R}^{d}\right)  $ such that
$\operatorname{div}\varphi=0$; and $D\left(  A\right)  =H^{2}\left(
D,\mathbb{R}^{d}\right)  \cap V$. Then we introduce the so called Stokes
operator $A:D\left(  A\right)  \subset H\rightarrow H$ defined as
$A\varphi=P_{H}\left(  \nu\Delta\varphi\right)  $ (in fact, in the case of
periodic boundary conditions, one can show that $A\varphi=\nu\Delta\varphi$).
Since $A^{-1}$ is compact,  there exists  a complete
orthonormal system $\{e_i\}$  of eigenvectors of $A$, with eigenvalues $\left\{
-\alpha_{i}^{2}\right\}  $, that we order such that $0<\alpha_{1}^{2}%
\leq\alpha_{2}^{2}\leq...$ One can show that, with these new concepts and
notations, the space $V$ defined in Section \ref{section assumptions} and the
space $V$ defined here coincide.

Let $B\left(  .,.\right)  :D\left(  A\right)  \times V\rightarrow H$ be
defined as%
\begin{equation}
B\left(  \varphi,\psi\right)  =-P_{H}\left(  \varphi\cdot\nabla\psi\right)  .
\label{bilinear map}%
\end{equation}
The expression $\int_{D}B\left(  \varphi,\psi\right)  \left(  x\right)
\theta\left(  x\right)  dx$, $\varphi\in D\left(  A\right)  $, $\psi\in V$,
$\theta\in H$, extends to $\varphi,\psi,\theta\in V$, and several other
classes of functions. For smooth fields $\varphi$, $\psi$, $\theta\in H$ we
have%
\[
\left\langle B\left(  \varphi,\psi\right)  ,\theta\right\rangle =-\left\langle
B\left(  \varphi,\theta\right)  ,\psi\right\rangle
\]
by a simple integration by parts, and this identity extends by density to
several spaces of weaker fields.

Using the previous set-up we may formally write the stochastic Navier-Stokes
equations in abstract form (the pressure disappears since $P_{H}\left(  \nabla
p\right)  =0$):%
\[
du=\left(  Au+B\left(  u,u\right)  \right)  dt+\sum_{i=1}^{\infty}\sqrt{a^{i}%
}e_{i}d\beta_{i}\left(  s\right)  .
\]
See \cite{FlaCIME} for a review on the 3D stochastic Navier-Stokes equations
and further details on the set-up.

To connect this equation with the abstract framework of this paper we consider
the space $E=C_{per}\left(  D;\mathbb{R}^{d}\right)  \cap H$, where
$C_{per}\left(  D;\mathbb{R}^{d}\right)  $ is the space of periodic continuous
vector fields on $D$, introduce the functions $f^{i}:\left[  0,T\right]
\times H\rightarrow\mathbb{R}$ defined as
\[
f^{i}\left(  t,x\right)  =-\left\langle B\left(  x,e_{i}\right)
,x\right\rangle =\int_{D}\left(  x\left(  \xi\right)  \cdot\nabla\right)
e_{i}\left(  \xi\right)  \cdot x\left(  \xi\right)  d\xi
\]
and consider the sequences $\left\{  \alpha_{i}^{2}\right\}  $ and $\left\{
a_{i}\right\}  $ above. As remarked above, since $\int_{D}B\left(
\varphi,\psi\right)  \left(  x\right)  \theta\left(  x\right)  dx$ extends to
$\varphi,\psi,\theta\in V$, there exists $f\left(  t,x\right)  $ with values
in $V^{\prime}$ such that $f^{i}\left(  x,t\right)  =\left\langle f\left(
x,t\right)  ,e_{i}\right\rangle $;\ it is given by $B\left(  x,x\right)  $,
when $x\in D\left(  A\right)  $.

We assume that $a_{i}$ has the form%
\[
a^{i}=\alpha_{i}^{-\varepsilon}%
\]
for some $\varepsilon$ such that%
\begin{align*}
\varepsilon &  >0\text{,\qquad for }d=2\\
\varepsilon &  >1\text{,\qquad for }d=3.
\end{align*}
This guarantees assumption (\ref{assumption 1}). Indeed, on the torus $D$, the
family of eigenvectors $\left\{  e_{i}\right\}  _{i\in\mathbb{N}}$ of $A$ can
be written (see \cite{Tem}) in the form $\left\{  e_{\alpha,k}\right\}  $ with
$k\in\mathbb{Z}_{\ast}^{d}=\mathbb{Z}^{d}\backslash\left\{  0\right\}  $ and
$\alpha$ which varies in the finite set $\left\{  1,...,d-1\right\}  $ and
their associated eigenvalues, indexed in the form $\left\{  \alpha_{k}%
^{2}\right\}  _{k\in\mathbb{Z}_{\ast}^{d}}$ (for each $k\in\mathbb{Z}_{\ast
}^{d}$ the eigenvalue $\alpha_{k}^{2}$ has multiplicity $d-1$), are given by
$\alpha_{k}^{2}=\left\Vert k\right\Vert ^{2}$. If we use the complex valued
notation, one has $e_{\alpha,k}\left(  \xi\right)  =c_{\alpha,k}e^{ik\cdot\xi
}$ where, for each $k$, the set of vectors $\left\{  c_{\alpha}\right\}
_{\alpha\in\left\{  1,...,d-1\right\}  }$ is an orthonormal basis of the space
in $\mathbb{R}^{d}$ orthogonal to $k$.

Hence we may rewrite $\sum_{i=1}^{\infty}\frac{a^{i}}{\alpha_{i}^{2}}$ as%
\[
\sum_{k\in\mathbb{Z}_{\ast}^{d}}\left\Vert k\right\Vert ^{-2-\varepsilon}%
\]
and this series converge in $d=2$ for every $\varepsilon>0$, and in $d=3$ for
every $\varepsilon>1$.

We claim that under these conditions all the assumptions of the paper are
verified and thus Theorem \ref{main theorem} holds. Let us check the assumptions.

The eigenvectors $e_{i}$ and their derivatives are bounded functions and thus assumption
(\ref{assumption 2}) holds with $p_{0}=2$.

We have, for smooth fields $v,z$,
$$
\begin{array}{l}
\ds \langle  f(v+z,t),v\rangle=\int_D((v+z)\cdot\nabla)(v+z) \cdot v\,d\xi=\int_D((v+z)\cdot\nabla)z \cdot v\,d\xi\\
\\
\ds=-\int_D((v+z)\cdot\nabla)v \cdot z\,d\xi\le \eta\|v\|^2_{H^1(D)}+C_\eta\||z|\,|v+z|\|^2_{L^2(D)}\\
\\
\ds\le \eta\|v\|^2_{H^1(D)}+2C_\eta\|z\|^2_{L^\infty(D)}\;\|z\|^2_{L^2(D)}+ 2C_\eta\|z\|^2_{L^\infty(D)}\;\|v\|^2_{L^2(D)}
\end{array}
$$
and the inequality extends to all $z\in E$, $v\in E\cap V$. Thus
(\ref{condition = assump 3}) holds true, with $k_0=4$.

Finally, assumptions (\ref{assumption 4.0})-(\ref{assumption 4}) are true
under the condition imposed above on $\left\{  \alpha_{i}^{2}\right\}  $ and
$\left\{  a^{i}\right\}  $. To show this, we have to use the theory of Section 5.5.1 of
\cite{DaPrato-Zabczyk} and the explicit form of the eigenfunctions $e_{i}$ of
the Stokes operator $A$. Since we need the bounds of this reference with a
precise control of the constants, we have repeated some of the computations in
the next lemma. 

\begin{lemma}
Assume $a^{i}=\alpha_{i}^{-\varepsilon}$ with $\varepsilon$ as above. Then the
random field%
\[
Z_{t}^{\lambda}\left(  \xi\right)  =\sum_{i=1}^{\infty}\int_{0}^{t}e^{-\left(
t-s\right)  \left(  \alpha_{i}^{2}+\lambda\right)  }\sqrt{a^{i}}d\beta
_{i}\left(  s\right)  e_{i}\left(  \xi\right)
\]
has a continuous modification in $\left(  t,\xi\right)  $ and satisfies
assumptions (\ref{assumption 4.0})-(\ref{assumption 4}).
\end{lemma}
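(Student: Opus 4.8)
The plan is to verify the two regularity properties of the Gaussian field $Z^\lambda_\cdot$ separately: first the $L^{2\vee k_0}(0,T;E)$-valued Gaussianity together with the limit property \eqref{assumption 4}, and then the existence of a continuous modification in $(t,\xi)$. Since we have computed $k_0=4$ above, we must control $Z^\lambda_\cdot$ in $L^4(0,T;E)$, where $E=C_{per}(D;\R^d)\cap H$. The key point is that these are \emph{Gaussian} fields, so by Fernique's theorem all $L^q$-norms are equivalent up to constants, and it suffices to establish finiteness of a single sufficiently strong moment together with the decay in $\lambda$ of a suitable second moment. I would use the explicit eigenfunction representation $e_{\alpha,k}(\xi)=c_{\alpha,k}e^{ik\cdot\xi}$ with $\alpha_k^2=\|k\|^2$ and $a^i=\alpha_i^{-\varepsilon}$, so that every quantity reduces to an explicit sum over $k\in\Z^d_*$.

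\textbf{Continuous modification.} The plan is to apply the Kolmogorov--Chentsov continuity criterion to the real-valued random field $(t,\xi)\mapsto Z^\lambda_t(\xi)$. Because $Z^\lambda_t(\xi)$ is Gaussian, it suffices to bound the increment variances $\E[|Z^\lambda_t(\xi)-Z^\lambda_s(\eta)|^2]$ by a power of $(|t-s|+\|\xi-\eta\|)$ with a positive exponent; Gaussianity then upgrades this to control of arbitrarily high moments, as needed for Kolmogorov's theorem on the $(d+1)$-dimensional parameter domain $[0,T]\times D$. Using independence of the $\beta_i$, the variance is a sum over $k$ of terms of the form $a^i\,\big|e_i(\xi)-e_i(\eta)\big|^2$ (for the spatial increment) plus an explicit time-increment factor coming from $e^{-(t-s)(\alpha_i^2+\lambda)}$. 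Since $|e^{ik\cdot\xi}-e^{ik\cdot\eta}|\le \min(2,\|k\|\,\|\xi-\eta\|)$ and the eigenfunctions are uniformly bounded, one gets H\"older-type bounds provided $\sum_k \|k\|^{2\theta-\varepsilon-2}<\infty$ for some $\theta\in(0,1)$, which holds exactly in the regime $\varepsilon>0$ ($d=2$) and $\varepsilon>1$ ($d=3$) coming from \eqref{assumption 1}. This is essentially the content of Section 5.5.1 of \cite{DaPrato-Zabczyk}, and I would follow those estimates, keeping explicit track of the constants as the authors indicate.

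\textbf{The $L^4(0,T;E)$-Gaussian property and the limit \eqref{assumption 4}.} Once a continuous modification exists, $Z^\lambda_\cdot$ is a genuine $C([0,T];E)$-valued (hence $L^{2\vee k_0}(0,T;E)$-valued) Gaussian random variable, giving \eqref{assumption 4.0}. For \eqref{assumption 4}, the plan is to estimate $\E\big[\int_0^T\|Z^\lambda_t\|_E^2\,dt\big]$ and show it tends to $0$ as $\lambda\to\infty$, which then yields \eqref{assumption 4} by Chebyshev's inequality (exactly as in the linear-growth example in Section~5.1). The supremum norm $\|Z^\lambda_t\|_E=\sup_\xi|Z^\lambda_t(\xi)|$ of a Gaussian field is controlled, via Fernique or Dudley-type entropy bounds, by its pointwise variance together with the modulus-of-continuity estimate already obtained; the crucial feature is that the covariance $\int_0^t e^{-2s(\alpha_i^2+\lambda)}\,a^i\,ds\le \tfrac{a^i}{2(\alpha_i^2+\lambda)}$ carries the factor $(\alpha_i^2+\lambda)^{-1}$, so every relevant sum is dominated by $\sum_i \frac{a^i}{\alpha_i^2+\lambda}\to 0$ as $\lambda\to\infty$ by dominated convergence. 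Alternatively, and more cleanly, Remark~\ref{r0} applies: since $e^{tA}$ is a strongly continuous semigroup on $E=C_{per}(D;\R^d)\cap H$, the limit \eqref{assumption 4} follows automatically from \eqref{assumption 4.0} with $Z^0_t$, so it truly suffices to treat the case $\lambda=0$.

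\textbf{Main obstacle.} The hard part will be the supremum-norm estimate for the Gaussian field, i.e.\ passing from finiteness of pointwise variances to integrability and $\lambda$-decay of $\sup_\xi|Z^\lambda_t(\xi)|^q$ uniformly enough in $t$ to integrate over $[0,T]$; this is where the dimensional borderline ($\varepsilon>1$ in $d=3$) is genuinely used and where one must invoke the continuity modulus from the Kolmogorov step rather than mere summability of variances. Everything else — the reduction to explicit sums over $k$, the Gaussian moment equivalences, and the application of Chebyshev — is routine once this entropy/continuity estimate is in place.
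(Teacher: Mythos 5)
Your overall strategy is correct, and its first half coincides with the paper's: the paper also obtains the continuous modification from the Kolmogorov-type criterion of Section 5.5.1 (Theorem 5.20, with Lemma 5.19 for the increments) of \cite{DaPrato-Zabczyk}, fed by exactly the Gaussian increment estimates you describe, interpolating between $\left\vert e_{i}\right\vert \leq C$ and $\left\vert \nabla e_{i}\right\vert \leq C\alpha_{i}$. Where you genuinely diverge is in passing from pointwise/increment variances to the sup-norm: you propose Dudley entropy bounds plus Gaussian concentration (Fernique/Borell--TIS), whereas the paper uses the elementary fractional Sobolev embedding $W^{\alpha,2m}(D)\subset C(D)$ with $\alpha=\varepsilon/10$ and $m$ a large integer, $m\geq 2\vee k_{0}$: it bounds $\E\left[\|Z^{\lambda}_{t}\|^{2m}_{L^{\infty}(D)}\right]$ by the $W^{\alpha,2m}$-moments, which are computed directly from the variance estimates with explicit constants $C_{m}(\lambda)$, $\widetilde C_{m}(\lambda)$. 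That choice buys two things at once: the high moments $2m\geq 2\vee k_{0}$ give \eqref{assumption 4.0} immediately (you instead deduce \eqref{assumption 4.0} from the continuous modification, which is also legitimate), and the explicit $\lambda$-dependence of the constants gives \eqref{assumption 4} by Chebyshev, just as in your plan. So your main route is a valid variant resting on the same covariance computations.

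Two points, however, need repair. First, your claim that ``every relevant sum is dominated by $\sum_{i}\frac{a^{i}}{\alpha_{i}^{2}+\lambda}$'' is wrong as written: any sup-norm bound (entropy or Sobolev) and its decay in $\lambda$ must go through the H\"older-increment constant, which has the form $\sum_{i}\frac{a^{i}\alpha_{i}^{2\theta}}{\alpha_{i}^{2}+\lambda}$ with an extra positive power of $\alpha_{i}$; its finiteness --- and hence its decay, by dominated convergence --- requires exactly your condition $\sum_{k}\|k\|^{2\theta-\varepsilon-2}<\infty$, i.e. $\theta<(2+\varepsilon-d)/2$, and this, not the pointwise variance, is where $\varepsilon>1$ for $d=3$ is used. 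The H\"older exponent must be chosen under this constraint (the paper fixes $\varepsilon/4$-type exponents; any admissible $\theta$ works), so in your write-up the decay statement has to be made for these weighted sums. Second, your ``cleaner alternative'' via Remark \ref{r0} presupposes that $e^{tA}$, $t\geq 0$, restricts to a strongly continuous semigroup on $E=C_{per}(D;\R^{d})\cap H$. The paper's closing remark states explicitly that the authors could not find a justification or reference for precisely this property of the Stokes semigroup on $E$, and that this is why they give the direct proof of \eqref{assumption 4}. You cannot invoke it as a known fact: either prove the strong continuity on $E$ or drop that alternative and rely on your direct argument.
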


\begin{proof}
The eigenfunctions $e_{i}$ have the properties $e_{i}\in C^1\left(
D;\mathbb{R}^{d}\right)  $, $\left\vert e_{i}\left(  \xi\right)  \right\vert
\leq C$, $\left\vert \nabla e_{i}\left(  \xi\right)  \right\vert \leq
C\alpha_{i}$, required in Section 5.5.1 of \cite{DaPrato-Zabczyk}. We have
also the other property asked in that reference, namely $\sum_{i=1}^{\infty
}\frac{a^{i}}{\left(  \alpha_{i}^{2}\right)  ^{1-\delta}}<\infty$ for some
$\delta>0$, and precisely, for the sequel, we take $\delta=\frac\varepsilon4$, then \[
\sum_{i=1}^{\infty}\frac{a^{i}}{\left(  \alpha_{i}^{2}\right)  ^{1-\frac
{\varepsilon}{4}}}<\infty.
\]
Indeed, the previous series is equal to $\sum_{k\in\mathbb{Z}_{\ast}^{d}%
}\left\Vert k\right\Vert ^{-2-\frac{\varepsilon}{2}}<\infty$ with the
equivalent language of the indexes $k\in\mathbb{Z}_{\ast}^{d}$. Hence Theorem
5.20 of \cite{DaPrato-Zabczyk} applies and gives us the existence of a
continuous modification of $Z_{t}^{\lambda}\left(  \xi\right)  $ in $\left(
t,\xi\right)  $. Let us be more precise from the quantitative viewpoint. In
the sequel, we write $Z_{t}^{\lambda}\left(  \xi\right)  $ for each one of its
$d$ components, for notational simplicity. It is sufficient to prove
assumptions (\ref{assumption 4.0})-(\ref{assumption 4}) for each component of
$Z_{t}^{\lambda}\left(  \xi\right)  $.

Lemma 5.19 of \cite{DaPrato-Zabczyk} gives us the estimate%
$$
E\left[  \left\vert Z_{t}^{\lambda}\left(  \xi\right)  -Z_{t}^{\lambda}\left(
\xi^{\prime}\right)  \right\vert ^{2}\right]     \leq C_{1}\left(
\lambda\right)  \left\vert \xi-\xi^{\prime}\right\vert ^{\frac{\varepsilon}{4}},\quad \forall\;t\in [0,\infty),
$$
where
$$
C_{1}\left(  \lambda\right)    :=C_{1}\sum_{i=1}^{\infty}\frac{a^{i}%
\alpha_{i}^{\frac{\varepsilon}{4}}}{\alpha_{i}^{2}+\lambda}%
$$
for some constant $C_{1}>0$. Indeed%
\begin{align*}
& E\left[  \left\vert Z_{t}^{\lambda}\left(  \xi\right)  -Z_{t}^{\lambda
}\left(  \xi^{\prime}\right)  \right\vert ^{2}\right]  \\
& =E\left[  \left\vert \sum_{i=1}^{\infty}\int_{0}^{t}e^{-\left(  t-s\right)
\left(  \alpha_{i}^{2}+\lambda\right)  }\sqrt{a^{i}}d\beta_{i}\left(
s\right)  \left(  e_{i}\left(  \xi\right)  -e_{i}\left(  \xi^{\prime}\right)
\right)  \right\vert ^{2}\right]  \\
& =\sum_{i=1}^{\infty}\left\vert e_{i}\left(  \xi\right)  -e_{i}\left(
\xi^{\prime}\right)  \right\vert ^{2}\int_{0}^{t}e^{-2\left(  t-s\right)
\left(  \alpha_{i}^{2}+\lambda\right)  }a^{i}ds\\
& \leq C\sum_{i=1}^{\infty}\frac{a^{i}\alpha_{i}^{\frac{\varepsilon}{4}}%
}{\alpha_{i}^{2}+\lambda}\left\vert \xi-\xi^{\prime}\right\vert ^{\frac
{\varepsilon}{4}}%
\end{align*}
because%
\[
\left\vert e_{i}\left(  \xi\right)  -e_{i}\left(  \xi^{\prime}\right)
\right\vert \leq C\alpha_{i}^{\frac{\varepsilon}{4}}\left\vert \xi-\xi
^{\prime}\right\vert ^{\frac{\varepsilon}{4}}.
\]
Then
$$
E\left[  \left\vert Z_{t}^{\lambda}\left(  \xi\right)  -Z_{t}^{\lambda}\left(
\xi^{\prime}\right)  \right\vert ^{2m}\right]     \leq C_{m}\left(
\lambda\right)  \left\vert \xi-\xi^{\prime}\right\vert ^{\frac{\varepsilon
m}{4}},\quad \forall\;t\in [0,\infty),
$$
where
$$
C_{m}\left(  \lambda\right)     :=C_{m}C_{1}\left(  \lambda\right)  ^{m}.
$$
for some constant $C_{m}>0$. We remark also the easier estimate%
$$
E\left[  \left\vert Z_{t}^{\lambda}\left(  \xi\right)  \right\vert
^{2m}\right]     \leq\widetilde{C}_{m}\left(  \lambda\right)  :=\widetilde{C}%
_{m}\widetilde{C}_{1}\left(  \lambda\right)  ^{m},\quad \forall\;t\in [0,\infty),
$$
where
$$
\widetilde{C}_{1}\left(  \lambda\right)    =\widetilde{C}_{1}\sum
_{i=1}^{\infty}\frac{a^{i}}{\alpha_{i}^{2}+\lambda}$$
for some constant $\widetilde{C}_{1}>0$.

Given $\alpha\in\left(  0,1\right)  $, for the $W^{\alpha,2m}\left(  D\right)
$-norm, we have the estimate%
\begin{align*}
& E\left[  \int_{D}\left\vert Z_{t}^{\lambda}\left(  \xi\right)  \right\vert
^{2m}d\xi\right]  +E\left[  \int_{D}\int_{D}\frac{\left\vert Z_{t}^{\lambda
}\left(  \xi\right)  -Z_{t}^{\lambda}\left(  \xi^{\prime}\right)  \right\vert
^{2m}}{\left\vert \xi-\xi^{\prime}\right\vert ^{d+2m\alpha}}d\xi d\xi^{\prime
}\right]  \\
& \leq (2\pi)^d\widetilde{C}_{m}\left(  \lambda\right)  +C_{m}\left(  \lambda\right)
\int_{D}\int_{D}\left\vert \xi-\xi^{\prime}\right\vert ^{\frac{\varepsilon
m}{4}-d-2m\alpha}d\xi d\xi^{\prime}\\
& =:\widetilde{C}_{m}\left(  \lambda\right)  +C_{m}\left(  \lambda\right)
\cdot C_{m,\alpha,\varepsilon,d}%
\end{align*}
and $C_{m,\alpha,\varepsilon,d}<\infty$ if $\frac{\varepsilon m}{4}%
-d-2m\alpha>-d$, namely $\alpha<\frac{\varepsilon}{8}$. Choose $\alpha
=\frac{\varepsilon}{10}$ in the sequel. We have $W^{\frac{\varepsilon}{10}%
,2m}\left(  D\right)  \subset C\left(  D\right)  $ for $2m\frac{\varepsilon
}{10}>d$, namely for $m>\frac{5d}{\varepsilon}$. Choose for the sequel $m$
equal to the smallest integer such that $m>\frac{5d}{\varepsilon}$,
$m\geq2\vee k_{0}$  with $k_{0}=4$. Then%
\[
E\left[  \left\Vert Z_{t}^{\lambda}\right\Vert _{L^{\infty}\left(  D\right)
}^{2m}\right]  \leq C_{m}^{\prime}\left(  \widetilde{C}_{m}\left(
\lambda\right)  +C_{m}\left(  \lambda\right)  \cdot C_{m,\alpha,\varepsilon
,d}\right)
\]
for some constant $C_{m}^{\prime}>0$. This implies assumption
(\ref{assumption 4.0}). Finally, assumption (\ref{assumption 4}) follows from
$\lim_{\lambda\rightarrow\infty}C_{1}\left(  \lambda\right)  =0$,
$\lim_{\lambda\rightarrow\infty}\widetilde{C}_{1}\left(  \lambda\right)  =0$
and Chebyshev inequality.
\end{proof}
\begin{remark}
\em We think that in this case (\ref{assumption 4}) also follows from Remark \ref{r0}. But we could not find a suitable reference for the condition on $e^{tA},\;t\ge 0,$ in Remark \ref{r0}. Therefore, we have given a direct proof of   (\ref{assumption 4}) above.

\end{remark}

{\bf Acknowledgements}. Support by the De  Giorgi Centre and the DFG through SFB 701 is gratefully acknowledged. The last named author would also like to thank the Scuola Normale Superiore   and the University of Pisa for the support and hospitality during several very pleasant visits during which large parts of this work was done.

\end{document}